\newtheorem{theorem}{Theorem}[section]
\newtheorem{lemma}[theorem]{Lemma}
\newtheorem{proposition}[theorem]{Proposition}
\newtheorem{remark}[theorem]{Remark}
\newtheorem{corollary}[theorem]{Corollary}
\newtheorem{definition}[theorem]{Definition}
\begin{document}

\title{\Large \textbf{Restriction of $p$-modular representations of $U(2, 1)$ to a Borel subgroup}}
\date{}
\author{\textbf{Peng Xu}}
\maketitle

\begin{abstract}
Let $G$ be the unramified unitary group $U(2, 1)(E/F)$ over a non-archimedean local field $F$ of odd residue characteristic $p$, and let $B$ be the standard Borel subgroup of $G$. In this paper, we study the problem of the restriction of irreducible smooth $\overline{\mathbf{F}}_p$-representations of $G$ to $B$, and we prove results which are analogous to that of Pa$\check{\text{s}}$k$\bar{\text{u}}$nas on $GL_2 (F)$ (\cite{Pas07}).
\end{abstract}

\section{Introduction}\label{sec: intro}

Let $G$ be the unitary group $U(2, 1)(E/F)$ defined over a non-archimedean local field $F$ of odd residue characteristic $p$, and let $B$ be the standard Borel subgroup of $G$. In this paper, we investigate the restriction of irreducible smooth $\overline{\mathbf{F}}_p$-representations of $G$ to $B$.

\smallskip

Our first main result is concerning principal series of $G$:
\begin{theorem}(Corollary \ref{restriction to borel for irre principal series}, \ref{restriction to borel for reducible principal series})\label{main result for non-ss: intro}
Let $\pi$ be a smooth representation of $G$. We have:

$(1)$.~ Let $\varepsilon$ be a character of $B$ such that $\varepsilon \neq \eta \circ \textnormal{det}$ for any character $\eta$ of $E^\times$. Then,
\begin{center}
$\textnormal{Hom}_G (\textnormal{ind}^G _B \varepsilon, \pi) \cong \textnormal{Hom}_B (\textnormal{ind}^G _B \varepsilon, \pi)$.
\end{center}

$(2)$.~For the trivial character of $B$, we have
\begin{center}
$\textnormal{Hom}_G (\textnormal{ind}^G _B 1, \pi) \cong \textnormal{Hom}_B (St, \pi) $.
\end{center}
Here $St$ is the Steinberg representation of $G$.
\end{theorem}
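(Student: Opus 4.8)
The plan is to derive both isomorphisms from a single mechanism: reconstructing a $G$-morphism out of a $B$-morphism using the structure of $\mathrm{Ind}^G_B \varepsilon$ established in Theorem \ref{intro: vigneras's theorem}. The starting observation is that, since $\mathrm{Ind}^G_B$ is right adjoint to restriction (Frobenius reciprocity), a $B$-map $\mathrm{Ind}^G_B \varepsilon \to \pi$ is the same datum as a $B$-map from $\mathrm{Ind}^G_B \varepsilon$ into $\pi$, so the content is entirely about extending $B$-equivariance to $G$-equivariance. The natural restriction map $\mathrm{Hom}_G(\mathrm{Ind}^G_B \varepsilon, \pi) \to \mathrm{Hom}_B(\mathrm{Ind}^G_B \varepsilon, \pi)$ is always injective (a $G$-map vanishing on $B$-generators vanishes), so in each case it suffices to prove surjectivity.

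First I would treat case $(1)$. By part $1)$ of Theorem \ref{intro: vigneras's theorem}, $\mathrm{Ind}^G_B \varepsilon \mid_B$ has length two; I would identify its unique $B$-subrepresentation and the quotient explicitly — following Vignéras, the subobject is (the $B$-restriction of) a representation built from the "lower-triangular" big cell and the quotient is the character $\varepsilon^s$ (or $\varepsilon$ itself) supported on the closed cell, the precise shape coming from the Bruhat decomposition $B \backslash G / B$. The hypothesis $\varepsilon \neq \eta \circ \det$ is what guarantees that these two $B$-constituents are non-isomorphic and, more importantly, that the full induced representation $\mathrm{Ind}^G_B\varepsilon$ is generated as a $G$-representation by any one of its nonzero $B$-subspaces that meets the open cell; concretely, given $\phi \in \mathrm{Hom}_B(\mathrm{Ind}^G_B\varepsilon, \pi)$, one shows the image $\phi(\mathrm{Ind}^G_B\varepsilon)$ is forced to be $G$-stable because the $G$-translates of the open-cell part already lie in the $B$-span, and then one checks directly that $\phi$ commutes with the remaining generators of $G$ over $B$ (a Weyl-element computation using the explicit formula for the $G$-action on functions on $B\backslash G$). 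Injectivity plus this surjectivity gives the isomorphism.

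For case $(2)$, with $\varepsilon = 1$, the extra input is part $2)$ of Theorem \ref{intro: vigneras's theorem}: $St\mid_B$ is irreducible. Here $\mathrm{Ind}^G_B 1$ sits in the short exact sequence $0 \to \mathbf{1} \to \mathrm{Ind}^G_B 1 \to St \to 0$ of $G$-representations, and after restriction to $B$ this is still non-split (the trivial character is not a $B$-quotient of $\mathrm{Ind}^G_B 1$, which one reads off from the length-two description) so $St\mid_B$ is exactly the unique irreducible $B$-quotient. Applying $\mathrm{Hom}_?(-,\pi)$ to the sequence, I would compare the long exact sequences for $G$ and for $B$: the key point is that $\mathrm{Hom}_G(\mathbf{1},\pi)$ and $\mathrm{Hom}_B(\mathbf{1}\mid_B, \pi)$ need not agree, but the maps out of $St$ do, because any $B$-map $St\mid_B \to \pi$ is injective (by irreducibility of $St\mid_B$) and one shows its image is a $G$-subrepresentation of $\pi$ — again using that $St$ is generated over $G$ by the $B$-translates of a single vector, together with the explicit action. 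Thus $\mathrm{Hom}_G(St,\pi) \cong \mathrm{Hom}_B(St,\pi)$, and then the diagram chase comparing this with the $\mathrm{Ind}^G_B 1$ terms, using $(1)$-type vanishing for the character piece, yields $\mathrm{Hom}_G(\mathrm{Ind}^G_B 1,\pi) \cong \mathrm{Hom}_B(St,\pi)$.

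The main obstacle will be the surjectivity step in both cases, i.e.\ showing that a $B$-equivariant map on the induced representation (or on $St$) is automatically $G$-equivariant. This reduces to the assertion that these representations are generated over $G$ by a suitable $B$-stable subspace on which the $B$-action already determines everything; proving this cleanly requires the explicit description of $\mathrm{Ind}^G_B\varepsilon\mid_B$ from Theorem \ref{intro: vigneras's theorem} together with a careful bookkeeping of the Bruhat cells and the Weyl-group element $s$ for $U(2,1)$ — this is exactly where the hypothesis $\varepsilon\neq\eta\circ\det$ (ruling out the degenerate cases where $\mathrm{Ind}^G_B\varepsilon$ has a one-dimensional sub or quotient whose $B$-structure is too coarse) is indispensable.
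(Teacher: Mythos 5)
Your high-level plan (analyze the length-two $B$-filtration of $\textnormal{Ind}^G_B\varepsilon$, prove injectivity of restriction separately, then concentrate on surjectivity) matches the shape of the paper's argument, but the two steps where you wave your hands are precisely where the paper does real work, and one of your intermediate claims in case $(2)$ is actually false.

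\textbf{Case $(1)$: the surjectivity step is not a Weyl-element computation.} You assert that given a $B$-map $\phi$, ``one checks directly that $\phi$ commutes with the remaining generators of $G$ over $B$ (a Weyl-element computation).'' There is no such direct check: the issue is not to verify a computable identity but to \emph{produce} a $G$-map out of a $B$-map, and nothing in the $B$-structure alone tells you how $\phi$ should interact with $\beta_K$. The paper's actual route is: it proves $\textnormal{Hom}_G(\textnormal{Ind}^G_B\varepsilon,\pi)\cong\textnormal{Hom}_B(\kappa_\varepsilon,\pi)$ (Theorem \ref{main result for non-supersingular}) where $\kappa_\varepsilon$ is the irreducible ``big-cell'' $B$-subobject, and surjectivity there is obtained by (i) locating a canonical $I_{1,K}$-invariant vector $g_2\in\kappa_\varepsilon$ with $S_+g_2=\varepsilon(\alpha)g_2$, (ii) transporting this eigenvalue relation through $\phi$ to show $\phi(g_2)$ is $I_{1,K}$-invariant and generates a specific weight $\sigma$, and then (iii) invoking the compact-induction realization $\textnormal{Ind}^G_B\varepsilon\cong\textnormal{ind}^G_K\sigma/(T_\sigma-\lambda)$ to extend $\phi$ to a $G$-map. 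None of this is recoverable from Bruhat-cell bookkeeping alone; your proposal is missing all of that machinery, and there is no cheaper substitute visible.

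\textbf{Case $(2)$: your intermediate claim is false.} You assert $\textnormal{Hom}_G(St,\pi)\cong\textnormal{Hom}_B(St,\pi)$ by arguing that any $B$-map $St\mid_B\to\pi$ has $G$-stable image, and then you want a diagram chase. Take $\pi=\textnormal{Ind}^G_B 1$. The evaluation-at-identity kernel $\kappa_1\subset\textnormal{Ind}^G_B 1$ is a $B$-stable but \emph{not} $G$-stable subspace (the only proper $G$-sub is the constants), and $\kappa_1\cong St\mid_B$; so the inclusion is a nonzero element of $\textnormal{Hom}_B(St,\textnormal{Ind}^G_B 1)$ whose image is not $G$-stable, while $\textnormal{Hom}_G(St,\textnormal{Ind}^G_B 1)=0$ because $St$ is not a $G$-sub of $\textnormal{Ind}^G_B 1$. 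This is exactly why the paper's statement is $\textnormal{Hom}_G(\textnormal{Ind}^G_B 1,\pi)\cong\textnormal{Hom}_B(St,\pi)$ and not the naive $St$-to-$St$ version, and why the proof does not factor through an identification of $\textnormal{Hom}_G(St,\pi)$ with $\textnormal{Hom}_B(St,\pi)$. Your proposed ``$(1)$-type vanishing for the character piece'' also does not help here: the trivial character is $1=1\circ\det$, so Lemma \ref{space of B-maps from a character} does not apply, and $\textnormal{Hom}_B(1,\pi)$ and $\textnormal{Hom}_G(1,\pi)$ need not vanish.

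In short, the correct mechanism (as in the paper) is to prove the single statement about $\kappa_\varepsilon$ using the $S_+$-eigenvalue trick plus the identification of $\textnormal{Ind}^G_B\varepsilon$ as a quotient of a compact induction, and then deduce both parts; case $(1)$ additionally uses that any nonzero $B$-endomorphism of the (irreducible) principal series is injective, and case $(2)$ is literally the $\varepsilon=1$ instance because $St\mid_B\cong\kappa_1$.
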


For a $p$-adic split connected reductive group, general results on restriction of principal series to a Borel subgroup have been obtained by Vign$\acute{\text{e}}$ras (\cite{V08}). Her approach can be modified to work for certain non-split groups of small ranks (\cite{Abd-2011}, \cite{Ly15a}). Our result above considers another aspect of this problem.

\smallskip
The work of Abe--Henniart--Herzig--Vign$\acute{\text{e}}$ras(\cite{AHHV17}) gives a classification of irreducible \emph{admissible} mod-$p$ representations of a $p$-adic reductive group in terms of \emph{admissible} supersingular representations. Roughly speaking, supersingular representations are the mod-$p$ analogue of supercuspidal representations. However, besides the group $GL_2 (\mathbf{Q}_p)$ (\cite{Breuil03}) and a few closely related cases, supersingular representations remain mysterious largely; indeed, such representations might not even be \emph{admissible} in general, as is shown in the work of Le (\cite{Le2019}). Note that Herzig--Kozio{\l}--Vign$\acute{\text{e}}$ras have proved the existence of admissible supersingular representation for any $p$-adic connected reductive group over $F$ of characteristic $0$ (\cite{HKV20}).

\smallskip
The following is our main result on supersingular representations of $G$:

\begin{theorem}\label{main result on ss in intro}
We have:

$(1)$. (Theorem \ref{main result on ss})\label{main result on ss: intro} ~Let $\pi$ be a supersingular representation of $G$. Then
\begin{center}
$\pi \mid_B$ is irreducible.
\end{center}

$(2)$.~(Theorem \ref{hom_G= hom_B for ss})\label{main result on ss II: intro}
Give two smooth representations $\pi$ and $\pi'$ of $G$. Suppose $\pi$ is supersingular. Then, we have
\begin{center}
$\textnormal{Hom}_G (\pi, \pi') \cong \textnormal{Hom} _B (\pi, \pi').$
\end{center}
\end{theorem}

\smallskip
An immediate application of $(1)$ above gives that the usual Jacquet module of a supersingular representation vanishes (Corollary \ref{vanishe of jacquet module}).
\medskip

 Our results are analogous to results of Pa$\check{\text{s}}$k$\bar{\text{u}}$nas on $GL_2 (F)$ (\cite{Pas07}), and we follow his strategy closely. To complete the proof of $(2)$ of Theorem \ref{main result on ss in intro}, we came to a new phenomenon which does not exist for $GL_2$. The operator $S_-$ is an analogue of the element $\Pi=\begin{pmatrix} 0  & 1  \\  \varpi_F & 0
\end{pmatrix}$ in $GL_2 (F)$, but in our case it can always happen that $S_- \cdot v =0 $ for some $v\in \pi^{I_{1, K}}$. This causes essential troubles when we study mod-$p$ representations of the group $G$. For the problem considered in this paper, we conquer such difficulty, see the argument of Theorem \ref{hom_G= hom_B for ss}. (For unexplained notations, see section \ref{sec: notations} and section \ref{sec: S_K and S_-}).

  \medskip
  When $F=\mathbf{Q}_p$, Pa$\check{\text{s}}$k$\bar{\text{u}}$nas' results were firstly discovered by Berger (\cite{Berger10}), where he uses the theory of $(\varphi, \Gamma)$-modules and classification of supersingular representations. In the work of Colmez on $p$-adic local Langlands correspondence of $GL_2 (\mathbf{Q}_p)$ (\cite{Col2010}), the restriction to a Borel subgroup plays a prominent role. We expect our results would also have some interesting arithmetic applications in the future.

\begin{remark}
Most part of this work (except for (2) of Theorem \ref{main result on ss II: intro}) was done when the author was a postdoc at Einstein Institute of Mathematics (2017-2018), and versions of that have been put on Arxiv in early of 2019 (see \cite{X2019a}). Sometime after that, we were aware of that Abdellatif and Hauseux have announced their results on the same problem in which they work for groups of semi-simple rank one (\cite{Abd-2021}). As far as we know (as of April/2024), their work has not appeared yet, and indeed some of their argument is close to ours.
\end{remark}

\smallskip
This paper is organized as follows. In section \ref{sec: notations}, after setting up general notations, we recall some preliminaries on weights and the Hecke operator $T$. In section \ref{sec: S_K and S_-}, we define certain $I_{1, K}$-invariant maps $S_K$ and $S_-$, and verify their basic properties. In section \ref{sec: non-supersingular}, we prove Theorem \ref{main result for non-ss: intro}. In section \ref{sec: supersingular}, we prove Theorem \ref{main result on ss: intro}.

\section{Notations and Preliminaries}\label{sec: notations}

\subsection{General notations}
Let $E/F$ be a unramified quadratic extension of non-archimedean local fields of odd residue characteristic $p$. Let $\mathfrak{o}_E$ be the ring of integers of $E$, $\mathfrak{p}_E$ be the maximal ideal of $\mathfrak{o}_E$, and $k_E = \mathfrak{o}_E /\mathfrak{p}_E$ be the residue field. Fix a uniformizer $\varpi_{E}$ in $E$. Equip $E^3$ with the Hermitian form h:
\begin{center}
 $\text{h}:~E^3 \times E^3 \rightarrow E$,
$(v_{1}, v_{2}) \mapsto ~v_{1}^{\text{T}}\beta \overline{v_2}, v_{1}, v_{2}\in E^3$.
\end{center}
Here, $-$ is a generator of $\text{Gal}(E/F)$, and
$\beta$ is the matrix
\[ \begin{matrix}\begin{pmatrix} 0  & 0 & 1  \\ 0  & 1 & 0\\
1 & 0 & 0
\end{pmatrix}
\end{matrix}. \]
The unitary group $G$ is defined as:

\begin{center}
$G=\{g\in \text{GL}(3, E)\mid \text{h}(gv_1, gv_2)= \text{h}(v_1, v_2), \forall~v_1, v_2~\in E^3\}.$
\end{center}

Let $B=HN$ (resp, $B'= HN'$) be the subgroup of upper (resp, lower) triangular matrices of $G$, with $N$ (resp, $N'$) the unipotent radical of $B$ (resp, $B'$) and $H$ the diagonal subgroup of $G$. A typical element in $H$ is of the following form and is denoted by $h(x, y)$:
\begin{center}
$h(x, y)= \begin{pmatrix}  x & 0 &  0 \\ 0 & y & 0\\
0 & 0 & \bar{x}^{-1}
\end{pmatrix}$
\end{center}
for $x\in E^\times, y \in E^1$. We will write $h(x, -\bar{x}x^{-1})$ as $h(x)$ for short. An element in $N$ and $N'$ is of the following form
\begin{center}
$\begin{pmatrix}  1 & x & y  \\ 0 & 1 & -\bar{x}\\
0 & 0 & 1
\end{pmatrix}$, ~
$\begin{pmatrix}  1 & 0 & 0   \\ x & 1 & 0\\
y & -\bar{x} & 1
\end{pmatrix}$
\end{center}
and are denoted by $n(x, y)$ and $n'(x, y)$. Here we recall that $(x, y)\in E^2$ satisfies the relation $x\bar{x}+ y+ \bar{y}=0$.

For any $k\in \mathbb{Z}$, denote by $N_k$ and $N'_k$ respectively the following subgroups of $N$ and $N'$
\begin{center}
$N_k = \{n(x, y)\in N \mid y \in \mathfrak{p}^k _E\}$,

$N'_k = \{n'(x, y)\in N' \mid y \in \mathfrak{p}^k _E\}$.
\end{center}

We record a useful identity in $G$: for $y\neq 0$,
\begin{equation}\label{useful identity}
\beta n(x, y)= n(\bar{y}^{-1}x, y^{-1})\cdot h(\bar{y}^{-1})\cdot n'(-\bar{y}^{-1}\bar{x}, y^{-1}).
\end{equation}

\medskip
Up to conjugacy, the group $G$ has two maximal compact open subgroups $K_0$ and $K_1$, given by:
\begin{center}
$K_0= \begin{pmatrix}  \mathfrak{o}_E & \mathfrak{o}_E & \mathfrak{o}_E  \\ \mathfrak{o}_E  & \mathfrak{o}_E & \mathfrak{o}_E\\
\mathfrak{o}_E & \mathfrak{o}_E & \mathfrak{o}_E
\end{pmatrix}\cap G, ~K_1= \begin{pmatrix}  \mathfrak{o}_E & \mathfrak{o}_E & \mathfrak{p}^{-1}_E  \\ \mathfrak{p}_E  & \mathfrak{o}_E & \mathfrak{o}_E\\
\mathfrak{p}_E & \mathfrak{p}_E & \mathfrak{o}_E
\end{pmatrix}\cap G.$
\end{center}
The maximal normal pro-$p$ subgroups of $K_0$ and $K_1$ are respectively:

$K^1 _0= 1+\varpi_E M_3 (\mathfrak{o}_E)\cap G, ~K^1 _1= \begin{pmatrix}  1+\mathfrak{p}_E & \mathfrak{o}_E & \mathfrak{o}_E  \\ \mathfrak{p}_E  & 1+\mathfrak{p}_E & \mathfrak{o}_E\\
\mathfrak{p}^2_E & \mathfrak{p}_E & 1+\mathfrak{p}_E
\end{pmatrix}\cap G.$

Let $\alpha$ be the following diagonal matrix in $G$:
\[ \begin{matrix}\begin{pmatrix} \varpi_{E}^{-1}  & 0 & 0  \\ 0  & 1 & 0\\
0 & 0 & \varpi_{E}
\end{pmatrix}
\end{matrix} ,\]
and put $\beta'=\beta \alpha^{-1}$. Note that $\beta\in K_0$ and $\beta'\in K_1$.  We use $\beta_K$ to denote the unique element in $K\cap \{\beta, \beta'\}$.

Let $K\in \{K_0, K_1\}$, and $K^1$ be the maximal normal pro-$p$ subgroup of $K$. We identify the finite group $\Gamma_K= K/K^1$ with the $k_F$-points of an algebraic group defined over $k_F$. Let $\mathbb{B}$ (resp, $\mathbb{B}'$) be the upper (resp, lower) triangular subgroup of $\Gamma_K$, and $\mathbb{U}$ (resp, $\mathbb{U}'$) be its unipotent radical. The Iwahori subgroup $I_K$ (resp, $I'_K$) and pro-$p$ Iwahori subgroup $I_{1,K}$ (resp, $I' _{1,K}$) in $K$ are the inverse images of $\mathbb{B}$ (resp, $\mathbb{B}'$) and $\mathbb{U}$ (resp, $\mathbb{U}'$).

\smallskip
Denote by $n_K$ and $m_K$ the unique integers such that $N\cap I_{1, K}= N_{n_K}$ and $N'\cap I_{1, K}=N'_{m_K}$. We have $n_K +m_K =1$. Note that the coset spaces $N_{n_K}/N_{n_K +1}$ and $N'_{m_K}/N'_{m_K +1}$ are indeed groups of order respectively $q^{t_K}$ and $q^{4-t_K}$, where $t_K= 3$ or $1$, depending on $K$ is hyperspecial or not.

\medskip

All representations in this note are smooth over $\overline{\mathbf{F}}_p$.

\subsection{Weights}\label{subsec: weights}
Let $\sigma$ be an irreducible smooth representation of $K$. As $K^1$ is pro-$p$ and normal in $K$, $\sigma$ factors through the finite group $\Gamma_K$, i.e., $\sigma$ is the inflation of an irreducible representation of $\Gamma_K$. Conversely, any irreducible representation of $\Gamma_K$ inflates to an irreducible smooth representation of $K$. We may therefore identify irreducible smooth representations of $K$ with irreducible representations of $\Gamma_K$, and we shall call them \emph{weights} of $K$ or $\Gamma_K$ from now on.

For a weight $\sigma$ of $K$, it is known that $\sigma^{I_{1,K}}$ and $\sigma_{I'_{1, K}}$ are one-dimensional, and that the natural composition map $\sigma^{I_{1,K}}\hookrightarrow \sigma \twoheadrightarrow \sigma_{I'_{1,K}}$ is an isomorphism of vector spaces (\cite[Theorem 6.12]{C-E2004}). This implies there exists a unique $\lambda_{\beta_K, \sigma}\in \overline{\mathbf{F}}_p$, such that $\beta_K \cdot v-\lambda_{\beta_K, \sigma}v\in \sigma (I'_{1,K})$, for $v\in \sigma^{I_{1,K}}$. By \cite[Proposition 3.16]{H-V2011} and the fact that $\beta_K \notin I_{K}\cdot I'_{K}$, the scalar $\lambda_{\beta_K, \sigma}$ is zero if $\text{dim} \sigma > 1$, and is equal to $\sigma (\beta_K)$ if $\text{dim} \sigma =1$.

\subsection{The Hecke operator $T$}\label{subsec: definition of T}
 Let $K \in \{K_0, K_1\}$, and $\sigma$ be a weight of $K$. Let $\text{ind}_K ^G \sigma$ be the maximal compact induction  and $\mathcal{H}(K, \sigma):=\text{End}_G (\text{ind}_K ^G \sigma)$ be the associate spherical Hecke algebra. The algebra $\mathcal{H}(K, \sigma)$ is isomorphic to $\overline{\mathbf{F}}_p [T]$, for certain $T \in \mathcal{H}(K, \sigma)$ (\cite[Corollary 1.3]{Her2011a}, see also \cite[Proposition 3.3]{X2019}).

We don't recall the exact definition of $T$ but only its formula on a specific function. For a non-zero vector $v\in \sigma$, denote by $\hat{f}_v$ the function in $\text{ind}^G _K \sigma$ supported on $K$ and having value $v$ at $Id$.
\begin{proposition}\label{T[Id, v_0]}
Take a non-zero vector $v_0$ in $\sigma^{I_{1, K}}$. Then, we have
\begin{equation}
T\hat{f}_{v_0}=\sum_{u\in N_{n_K}/N_{n_K+ 2}}~u\alpha^{-1}\cdot \hat{f}_{v_0}+ \lambda_{\beta_K, \sigma}\sum_{u\in N_{n_K +1}/N_{n_K+ 2}}\beta_K u \alpha^{-1}\cdot \hat{f}_{v_0}
\end{equation}
\end{proposition}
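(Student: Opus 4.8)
The plan is to unwind the definition of $T$ via the description of $\mathcal{H}(K,\sigma)$ as a convolution algebra of $\operatorname{End}_{\overline{\mathbf{F}}_p}(\sigma)$-valued functions on $G$, as in \cite[\S 2]{B-L94}, and then to carry out the resulting double-coset bookkeeping. By \cite[Corollary 1.3]{Her2011a} (compare \cite[Proposition 3.3]{X2019}) one may take $T$ to be the Hecke operator $\Phi$ supported on the single double coset $K\alpha K$ whose value $\Phi(\alpha)\in\operatorname{End}_{\overline{\mathbf{F}}_p}(\sigma)$ is the composite
\[
\sigma\ \twoheadrightarrow\ \sigma_{I'_{1,K}}\ \xrightarrow{\ \sim\ }\ \sigma^{I_{1,K}}\ \hookrightarrow\ \sigma ,
\]
the middle isomorphism being the inverse of the one in \cite[Theorem 6.12]{C-E2004} recalled in \S\ref{subsec: weights}; in particular $\Phi(\alpha)$ annihilates $\sigma(I'_{1,K})$ and is the identity on $\sigma^{I_{1,K}}$. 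With the standard action formula one has $(\Phi\ast\hat{f}_{v_0})(g)=\Phi(g)v_0$, hence if $K\alpha K=\bigsqcup_j K\gamma_j$ is a decomposition into left cosets then
\[
T\ast\hat{f}_{v_0}\ =\ \sum_j\ \gamma_j^{-1}\cdot\hat{f}_{\,\Phi(\gamma_j)v_0}
\]
(the terms with $\Phi(\gamma_j)v_0=0$ being zero). It remains to choose the $\gamma_j$ and to evaluate $\Phi(\gamma_j)v_0$.

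The core of the argument is the coset combinatorics. Using $\alpha N_k\alpha^{-1}=N_{k-2}$ and the Iwahori factorisation of the Iwahori subgroup of $K$, together with the identity \eqref{useful identity} to handle the Bruhat cell through the Weyl element $\beta_K$, I would show that the cosets $K\gamma_j$ on which $\Phi(\gamma_j)v_0$ is not automatically zero are represented by
\[
\alpha u^{-1}\quad (u\in N_{n_K}/N_{n_K+2})\qquad\text{and}\qquad \alpha u^{-1}\beta_K\quad (u\in N_{n_K+1}/N_{n_K+2}),
\]
and that these two families are disjoint: disjointness inside the first uses $\alpha^{-1}K\alpha\cap N=N_{n_K+2}$, while \eqref{useful identity} is precisely what lets one re-express $\beta_K n(x,y)$ and see that the Weyl contribution is confined to the sub-filtration $N_{n_K+1}$. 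The relations $n_K+m_K=1$ and $n_K\ge 0$ are used to place these pieces correctly inside the pro-$p$ Iwahoris $I_{1,K}$ and $I'_{1,K}$, and the whole discussion is run in parallel for $K=K_0$ and $K=K_1$.

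It then remains to evaluate $\Phi$ on the representatives, using $\beta_K^2=1$ (a direct check, since $\beta^2=\beta'^2=\mathrm{Id}$). For $\gamma=\alpha u^{-1}$ with $u\in N_{n_K}=N\cap I_{1,K}$, equivariance gives $\Phi(\alpha u^{-1})v_0=\Phi(\alpha)\,\sigma(u^{-1})v_0=\Phi(\alpha)v_0=v_0$ because $v_0\in\sigma^{I_{1,K}}$; summing the corresponding terms $\gamma^{-1}\cdot\hat{f}_{v_0}=u\alpha^{-1}\cdot\hat{f}_{v_0}$ gives the first sum. For $\gamma=\alpha u^{-1}\beta_K$ with $u\in N_{n_K+1}$ we have $\gamma^{-1}=\beta_K u\alpha^{-1}$ and $u^{-1}\in N_{n_K+1}\subseteq N\cap I'_{1,K}$, so $\sigma(u^{-1})$ acts trivially on the one-dimensional space $\sigma_{I'_{1,K}}$; as $\Phi(\alpha)$ factors through $\sigma_{I'_{1,K}}$, the value $\Phi(\alpha u^{-1}\beta_K)v_0$ depends only on the image of $\sigma(\beta_K)v_0$ in $\sigma_{I'_{1,K}}$, which by the definition of $\lambda_{\beta_K,\sigma}$ is $\lambda_{\beta_K,\sigma}$ times the image of $v_0$; since $\Phi(\alpha)$ kills $\sigma(I'_{1,K})$ and fixes $v_0$, this value is $\lambda_{\beta_K,\sigma}v_0$. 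The corresponding terms assemble into $\lambda_{\beta_K,\sigma}\sum_{u\in N_{n_K+1}/N_{n_K+2}}\beta_K u\alpha^{-1}\cdot\hat{f}_{v_0}$, and adding the two contributions gives the stated formula.

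The hard part is the second step. Pinning down the exact index sets $N_{n_K}/N_{n_K+2}$ and $N_{n_K+1}/N_{n_K+2}$, verifying disjointness of the two families, and — crucially — checking that $\Phi(\gamma)v_0$ really does vanish on all remaining cosets of $K\alpha K$, requires a careful description of the filtration $(N_k)$ and of the two Iwahori factorisations inside $K_0$ and inside $K_1$, and it is here that the explicit matrix identity \eqref{useful identity} does the essential work.
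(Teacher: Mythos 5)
The paper does not prove this Proposition at all: it is quoted verbatim from \cite[Proposition 3.6]{X2019} with a citation and no argument, so there is no ``paper's own proof'' to compare against. Your blind reconstruction is the natural one and, as far as I can check, is structurally correct: realise $T$ as the Hecke function supported on $K\alpha K$ whose value at $\alpha$ is the projector $\sigma\twoheadrightarrow\sigma_{I'_{1,K}}\xrightarrow{\sim}\sigma^{I_{1,K}}\hookrightarrow\sigma$; decompose $K\alpha K$ into left cosets; evaluate. The action formula $(\Phi\ast\hat{f}_{v_0})(g)=\Phi(g)v_0$ and the resulting expression $T\hat{f}_{v_0}=\sum_j\gamma_j^{-1}\cdot\hat{f}_{\Phi(\gamma_j)v_0}$ for $K\alpha K=\bigsqcup_jK\gamma_j$ are both correct. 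The filtration facts you use implicitly, namely $N\cap\alpha^{-1}K\alpha=N_{n_K+2}$ and $N\cap I'_{1,K}=N_{n_K+1}$, hold for both $K=K_0$ (where $n_K=0$) and $K=K_1$ (where $n_K=-1$), so the evaluation steps $\Phi(\alpha u^{-1})v_0=v_0$ and $\Phi(\alpha u^{-1}\beta_K)v_0=\lambda_{\beta_K,\sigma}v_0$ go through.

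The genuine soft spot --- which you flag yourself --- is the assertion that the two families $\alpha u^{-1}$ ($u\in N_{n_K}/N_{n_K+2}$) and $\alpha u^{-1}\beta_K$ ($u\in N_{n_K+1}/N_{n_K+2}$) are precisely the left cosets of $K\alpha K$ carrying a nonzero contribution, i.e.\ that on every remaining coset representative $\gamma$ one has $\sigma(\alpha^{-1}\gamma)v_0\in\sigma(I'_{1,K})$. As written this is a claim rather than a proof; it is exactly here that the Iwahori factorisation of $K$ and the identity \eqref{useful identity} must be deployed, and the argument must be run separately for $K_0$ and $K_1$ because they have different $\Gamma_K$. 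So: right approach, correct evaluations, but not yet a complete proof --- which is appropriate given that the paper itself simply delegates the whole statement to \cite{X2019}. If you want a self-contained argument you should write out the coset decomposition of $K\alpha K$ in full and verify the vanishing on the remaining cells.
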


\begin{proof}
This is \cite[Proposition 3.6]{X2019}. Note that the above formula determines $T$ uniquely, as the function $\hat{f}_{v_0}$ generates the whole representation $\text{ind}^G _K \sigma$.
\end{proof}

\section{The $I_{1, K}$-invariant maps $S_K$ and $S_-$}\label{sec: S_K and S_-}

In this section, we recall some partial linear operators on a smooth representation $\pi$, and their certain invariant properties.

\begin{definition}
Let $\pi$ be a smooth representation of $G$. We define:
\begin{center}
$S_K: \pi^{N'_{m_K}} \rightarrow \pi^{N_{n_K}}$,

$v \mapsto \sum_{u \in N_{n_K}/N_{n_K +1}}~u \beta_K v$.
\end{center}

\begin{center}
$S_-: \pi^{N_{n_K}} \rightarrow \pi^{N'_{m_K}}$,

$v\mapsto \sum_{u\in N _{n_K +1 }/N _{n_K +2}}~\beta_K u\alpha^{-1}v$
\end{center}

\end{definition}

It is simple to check both $S_K$ and $S_-$ are well-defined.
\begin{proposition}\label{S_K and S_- preserve I_1}
We have:

$(1)$.~~Let $h\in H_0 =I_K \cap H$. Then $S_K (hv)= h^s\cdot S_K v$, for $v\in \pi^{N'_{m_K}}$, and $S_- (hv)= h^s\cdot S_- v$, for $v\in \pi^{N_{n_K}}$, where $h^s$ is short for $\beta_K h \beta_K$.

$(2)$.~~If $v$ is fixed by $I_{1, K}$, the same is true for $S_K \cdot v$ and $S_- \cdot v$.
\end{proposition}

\begin{proof}

For $(1)$, we note that the group $H_0$ acts on $\pi^{N_{n_K}}$ and $\pi^{N'_{m_K}}$, as it normalizes $N_{n_K}$ and $N'_{m_K}$. The statement then follows from the definitions.

To prove $(2)$, we need the following Lemma.
\begin{lemma}\label{exchange lemma}
Given a $u'\in N'_{m_K}$ and a $u\in N_{n_K}$.

$(1)$. There is a unique $u_1\in N_{n_K}, h\in H_1, u'_1 \in N'_{m_K}$ so that the following identity
\begin{center}
$u'u= u_1 h u'_1$
\end{center}
holds.

$(2)$. For any $l > m\geq 0$, when $u$ goes through $N_{n_K +m} / N_{n_K +l}$, the element $u_1$ also goes through $N_{n_K +m}/ N_{n_K +l}$.
\end{lemma}

\begin{proof}
The uniqueness statement is clear, and only the existence needs to be proved. Assume $u =n(x, y) \in N, u' \in n'(x', y') \in N'$. Then, if $1+x x' + \overline{y y'}\in E^\times$, we have
 \begin{center}
 $u'u= u_1 h u'_1$,
\end{center}
where $u_1= n(x_1, y_1) \in N$ in which $x_1, y_1$ are given by
\begin{center}
$x_1= \frac{x- \overline{y x'}}{1+x x'+\overline{y y'}}, y_1= \frac{y}{1+\overline{x x'}+y y'}$,
\end{center}
and $h$ is the diagonal matrix
\begin{center}
$\begin{pmatrix}  \frac{1}{1+x x' + \overline{y y'}} & 0 & 0  \\ 0 & \frac{1+x x'+ \overline{y y'}}{1+ \overline{x x'} +y y'} & 0\\
0 & 0 & 1+ \overline{x x'}+ y y'
\end{pmatrix},$
\end{center}
and $u'_1= n'(x'_1, y'_1) \in N'$ in which
\begin{center}
$x'_1 = \frac{x'- \overline{xy'}}{1+ xx' +\overline{yy'}}, y'_1 = \frac{y'}{1+ \overline{xx'}+yy'}$.
\end{center}

Under our assumption that $u'\in N'_{m_K}$ and $u\in N_{n_K}$, the condition $1+x x' + \overline{y y'}\in E^\times$ holds automatically. The existence is established.

\medskip
We continue to prove $(2)$. From the formulae of $x_1$ and $y_1$ given above, one checks by a direct computation that
\begin{center}
$u_1 \in u N_{n_K +m +1}$, if $u\in N_{n_K +m}$ for some $m\geq 0$.
\end{center}
Explicitly,
\begin{equation}\label{u_1= uN+}
u_1 =u\cdot n(\ast, yz)
\end{equation}
holds for some $z\in \mathfrak{p}_E$. Recall that $u=n(x, y)\in N_{n_K}, u'= n'(x', y')\in N'_{m_K}$.

We may therefore view $u'$ as a map
\begin{center}
$u': N_{n_K +m} / N_{n_K +l} \rightarrow N_{n_K +m} / N_{n_K +l}$

$u N_{n_K +l} \mapsto u_1 N_{n_K +l}$
\end{center}

It suffices to show the map is injective. Assume for another $w\in N_{n_K}$, we have a decomposition $u'w= u_2 b''$ for $u_2\in N_{n_K}$ and $b''\in B'$. We have to prove:
\begin{center}
 $u_2\in u_1 N_{n_K +l}$ implies $w\in u N_{n_K +l}$.
\end{center}
 Write $u^{-1}_1 u_2$ as $u_3$. A little algebraic transform gives:
\begin{center}
$w= u \cdot b'^{-1}u_3 b''$
\end{center}
We need to check that the element $b'^{-1}u_3 b'' \in N_{n_K}$, denoted by $u_4$, lies in $N_{n_K +l}$. The element $b'$ can be written as $h\cdot u'_1$, for a diagonal matrix $h\in H_1$ and $u'_1 \in N'_{m_K}$. We therefore get
\begin{center}
$u'_1 u_4= (h^{-1}u_3 h)\cdot h^{-1}b''$,
\end{center}
where the right hand side is a decomposition of $u'_1 u_4$ given in $(1)$. The uniqueness of such a decomposition implies
\begin{center}
$u_4\in N_{n_K +l}$ if and only if $h^{-1}u_3 h\in N_{n_K +l}$
\end{center}
for any $l\geq 0$. Our assumption is that $u_3= u^{-1}_1 u_2\in N_{n_K +l}$, which is the same as $h^{-1}u_3 h\in N_{n_K +l}$ ($h\in H_1$). We are done.
\end{proof}

We proceed to complete the argument of $(2)$ of the Proposition. By $(1)$ and the decomposition of $I_{1, K}= N'_{m_K} \times H_1 \times N_{n_K}$, it suffices to check that, for $u'= n'(x, y)\in N'_{m_K}$, the element $u'\cdot S_K v$
\begin{center}
$u'\cdot S_K v= \sum_{u\in N_{n_K}/N_{n_K +1}}~u'u\beta_K v$
\end{center}
is still equal to $S_K v=\sum_{u\in N_{n_K}/N_{n_K +1}}~u\beta_K v$. By $(1)$ of Lemma \ref{exchange lemma}, the right hand side of above sum is equal to:
\begin{center}
$\sum_{u\in N_{n_K}/N_{n_K +1}}~u_1 h u'_1 \beta_K v$.
\end{center}
 We get:
\begin{center}
$u'\cdot S_K v=\sum_{u\in N_{n_K}/N_{n_K +1}}~u_1\beta_K (\beta_K h u'_1 \beta_K) v =\sum_{u\in N_{n_K}/N_{n_K +1}}~u_1\beta_K v$,
\end{center}
which, by $(2)$ of Lemma \ref{exchange lemma}, is just $\sum_{u_1\in N_{n_K}/N_{n_K +1}}~u_1\beta_K v$. The argument for the statement $S_K v \in \pi^{I_{1, K}}$ for $v\in \pi^{I_{1, K}}$ is now complete.

\smallskip
By almost the same argument, one can verify that $S_-\cdot v \in \pi^{I_{1, K}}$ for $v\in \pi^{I_{1, K}}$.
\end{proof}

\begin{remark}\label{refinement of exchange lemme}
A slight variant of $(2)$ holds by the same argument. When $u$ goes through $(N_{n_K +m} \smallsetminus N_{n_K +n}) / N_{n_K +l}$, the element $u_1$ also goes through $(N_{n_K +m}\smallsetminus N_{n_K +n})/ N_{n_K +l}$, for any $l > n \geq m\geq 0$.
\end{remark}
\smallskip

We apply the operators $S_-$ and $S_K$ to the $I_{1, K}$-invariants of a principal series $\text{ind}^G _B \varepsilon$. The space $(\text{ind}^G _B \varepsilon)^{I_{1, K}}$ is two dimensional and a basis of that is given by the functions $g_1$ and $g_2$: the function $g_1$ is supported on $BI_K$ and satisfies $g_1 (Id) =1$; the function $g_2$ is supported on $B\beta_K I_K $ and satisfies that $g_2 (\beta_K)=1$.

\begin{proposition}\label{action of S_- and S_K on principal series}
We have:

$(1)$.~ $S_K g_1 =g_2, ~S_K g_2 = d_{q^{t_K}} \cdot g_2$.

$(2)$.~$S_- g_1 = d_{q^{4- t_K}} \cdot g_1, ~S_- g_2= \varepsilon (\alpha) g_1$.

\noindent Here, $d_{q^{t_K}}= \sum_{n(x, t)\in (N_{n_K} \smallsetminus N_{n_K +1})/ N_{n_K +1}} \varepsilon_0 ((h(t))$,

\noindent $d_{q^{4- t_K}}=\sum_{n(x, t)\in (N_{n_K +1} \smallsetminus N_{n_K +2})/ N_{n_K +2}} \varepsilon_0 ((h(t))$, $\varepsilon_0= \varepsilon\mid_{B\cap K}$.
\end{proposition}

\begin{proof}
By Proposition \ref{S_K and S_- preserve I_1}, it suffices to compute the values of the functions in consideration at $Id$ and $\beta_K$. We omit the details. Note that the exact values of $d_{q^{t_K}}$ and $d_{q^{4- t_K}}$ depend on the nature of $\varepsilon_0$, see \cite[Appendix A]{Karol-Peng2012}.
\end{proof}

Later on, we will use the composition $S_+ =  S_K \circ S_-$, which lies in $\text{End}_{\overline{\mathbf{F}}_p}(\pi^{N_{n_K}})$. Explicitly for $v \in \pi^{N _{n_K}}$, we have
\begin{center}
$S_+ v = \sum_{u\in N_{n_K} /N_{n_K +2}}~u\alpha^{-1}v$.
\end{center}
By Proposition \ref{S_K and S_- preserve I_1}, it preserves the $I_{1, K}$-invariants of a smooth representation.

\section{Non-supersingular representations}\label{sec: non-supersingular}

\subsection{Some recaps}\label{subsec: recap on principal series}
In this subsection, we recall briefly the restriction of a principal series to a Borel subgroup. For readers' convenience, we reproduce certain details below where we mainly follow the approach in \cite{V08}.

For a character $\varepsilon$ of $B$, consider the principal series $\text{ind}^G _B \varepsilon$. Recall that $\text{ind}^G _B \varepsilon$ is reducible if and only if $\varepsilon= \eta\circ \text{det}$ for some character $\eta$, and in this case it is of length two.
Evaluating an $f \in \text{ind}^G _B \varepsilon$ at the identity, we get a $B$-map from the principal series to the character $\varepsilon$. Denote the kernel by $\kappa_\varepsilon$. Then we have a short exact sequence of $B$-representations:
\begin{center}
$0\rightarrow \kappa_\varepsilon \rightarrow \text{ind}^G _B \varepsilon \rightarrow \varepsilon \rightarrow 0$
\end{center}

By almost the same argument of \cite[Theorem 5]{V08}, one shows that the $B$-representation $\kappa_\varepsilon$ is irreducible. Indeed, as explained below, one may even prove $\kappa_\varepsilon$ is irreducible as a representation of $\alpha^{\mathbb{Z}}N$. But by the same map $\Phi$ below, one may verify that $St\mid_B \cong \kappa_1$. This gives irreducibility of $St\mid_B$.

\begin{lemma}\label{irr of kappa}
The $B$-representation $\kappa_\varepsilon$ is irreducible.
\end{lemma}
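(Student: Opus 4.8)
\emph{Plan.} The statement to aim for is the a~priori stronger one indicated in the discussion above: $\kappa_\varepsilon$ is already irreducible when restricted to the subgroup $\alpha^{\mathbb{Z}}N \subseteq B$, and irreducibility over $B$ follows at once. So throughout I work with the $\alpha^{\mathbb{Z}}N$-action.

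\emph{Step 1: a concrete model.} First I would exhibit $\kappa_\varepsilon\mid_{\alpha^{\mathbb{Z}}N}$ as a compact induction. Since $G$ has relative rank one, the Bruhat decomposition reads $G = B \sqcup B\beta_K N$, so an $f \in \textnormal{Ind}^G_B\varepsilon$ with $f(Id)=0$ is supported on the big cell and is determined by $n\mapsto f(\beta_K n)$. Because $B\backslash G$ is compact, the support of such an $f$ is a compact subset of $B\backslash B\beta_K N$; conversely any compactly-supported-mod-$B$ smooth function on the big cell extends by zero to a smooth function on $G$. The stabilizer in $\alpha^{\mathbb{Z}}N$ of the point $B\beta_K$ is exactly $\alpha^{\mathbb{Z}}$ (here one uses $\beta_K^{-1}B\beta_K = B'$ and $N\cap B'=\{Id\}$), and $B\beta_K$ has full $\alpha^{\mathbb{Z}}N$-orbit on the big cell. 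This identifies $\kappa_\varepsilon\mid_{\alpha^{\mathbb{Z}}N}$ with the compact induction $\textnormal{ind}^{\alpha^{\mathbb{Z}}N}_{\alpha^{\mathbb{Z}}}\xi$, where $\xi(\alpha)=\varepsilon(\beta_K\alpha\beta_K^{-1})$; concretely $\kappa_\varepsilon$ becomes the space of compactly supported locally constant $\overline{\mathbf{F}}_p$-valued functions on $N$, with $N$ acting by right translation and $\alpha$ acting by the conjugation $n\mapsto \alpha^{-1}n\alpha$ followed by the scalar $\xi(\alpha)$. The identity \eqref{useful identity} is what makes the passage through the big cell explicit, in particular the normalisation of the $\alpha$-action and the computation of $\xi$; note $\alpha^{-1}N_j\alpha = N_{j+2}$, so the $\alpha^{\mathbb{Z}}$-action carries $\mathbf{1}_{N_j}$ to a nonzero scalar times $\mathbf{1}_{N_{j\pm 2}}$.

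\emph{Step 2: every nonzero subrepresentation contains a ball function.} Let $0\neq V\subseteq \kappa_\varepsilon$ be $\alpha^{\mathbb{Z}}N$-stable and pick $0\neq\phi\in V$. Since the $N_{-t}$ exhaust $N$, the support of $\phi$ lies in some compact open subgroup $N_{-t}$, and $\phi$ is right $N_k$-invariant for $k\gg 0$. As $N$ is a union of pro-$p$ groups, I may choose an open subgroup $N_k^{\circ}\subseteq N_k$ normal in $N_{-t}$, so that $N_{-t}/N_k^{\circ}$ is a finite $p$-group; identifying right-$N_k^{\circ}$-invariant functions supported in $N_{-t}$ with the group algebra $\overline{\mathbf{F}}_p[N_{-t}/N_k^{\circ}]$, the span of the right $N_{-t}$-translates of $\phi$ becomes a nonzero right ideal. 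Now $\overline{\mathbf{F}}_p[P]$ for a finite $p$-group $P$ is a local symmetric, hence Frobenius, algebra, so its socle is one-dimensional, spanned by $\sum_{g\in P}g$, and every nonzero right ideal contains it; therefore $\mathbf{1}_{N_{-t}}\in V$. This is exactly where the argument parts company with the $\ell\neq p$ setting of \cite{V08}: one cannot average over $N_{-t}/N_k^{\circ}$ in characteristic $p$, and this is replaced by the Frobenius-algebra structure; the non-abelian (Heisenberg) structure of $N$ is the reason one passes to $N_k^{\circ}$ rather than using $N_{-t}/N_k$ directly.

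\emph{Step 3: a ball function generates everything.} From $\mathbf{1}_{N_{-t}}\in V$ one recovers all of $\kappa_\varepsilon$: the $N$-translates give the characteristic function of every right $N_{-t}$-coset, finite sums of these over cosets give $\mathbf{1}_{N_j}$ for all $j\leq -t$, and one application of $\alpha^{-1}$ produces a strictly smaller ball, so iterating $\alpha^{\pm 1}$ together with coset-sums yields $\mathbf{1}_{N_j}$ for every $j\in\mathbb{Z}$; with their $N$-translates these span the whole space of compactly supported locally constant functions on $N$. Hence $V=\kappa_\varepsilon$, so $\kappa_\varepsilon\mid_{\alpha^{\mathbb{Z}}N}$, and \emph{a fortiori} $\kappa_\varepsilon\mid_B$, is irreducible. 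I expect the main obstacle to be Step~1: verifying that ``smooth in $\textnormal{Ind}^G_B\varepsilon$ and vanishing at $Id$'' corresponds precisely to ``compactly supported mod $B$ on the big cell'', and that the $\alpha^{\mathbb{Z}}N$-action is the twisted translation/contraction described, which rests on the Bruhat geometry of $U(2,1)$ and on \eqref{useful identity}; accommodating the Heisenberg structure of $N$ in the socle argument of Step~2 is the second delicate point.
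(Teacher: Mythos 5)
Your proposal is correct and follows essentially the same route as the paper: identify $\kappa_\varepsilon$ with $C^\infty_c(N)$ (the paper's $\Phi,\Psi$; your compact-induction $\textnormal{ind}^{\alpha^{\mathbb{Z}}N}_{\alpha^{\mathbb{Z}}}\xi$ is the same model), show that any nonzero $B$-stable subspace contains a characteristic function $\mathbf{1}_{N_j}$ of a compact open ball, and then span everything by right $N$-translation, coset sums, and the $\alpha$-contraction. The only cosmetic difference is in the middle step, where you deduce the bump function from the one-dimensional socle of the group algebra of a finite $p$-group, whereas the paper gets it directly from \cite[Lemma 1]{B-L95} applied to the subspace $V_k$ of functions supported in $N_k$ -- these are the same underlying fact.
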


\begin{proof}
1).~ We firstly identify the underlying space of $\kappa_\varepsilon$ with $C^\infty _c (N)$.

$\Phi: \kappa_\varepsilon \rightarrow C^\infty _c (N), f \mapsto \Phi (f), \Phi(f)(u) = f(\beta u), \forall u\in N.$

$\Psi: C^\infty _c (N) \rightarrow \kappa_\varepsilon, f\mapsto \Psi (f), \Psi (f) (b\beta u) = \varepsilon(b)f(u), \Psi (f)(b)=0, \forall b\in B, u \in N.$

For $b=hu_1\in B$ where $h\in H, u_1\in N$, and $f\in C^\infty _c (N)$, we put $b\cdot f (u)=\varepsilon (h^s) f((h^{-1} u h)  u_1)$, where $h^s$ denotes $\beta h \beta$. This gives $C^\infty _c (N)$ a structure of $B$-representation. We check easily that $\Psi$ and $\Phi$ are both $B$-equivariant, and are inverse to each other.

2).~We modify the argument of \cite[Proposition 5.2]{Ly15a} to our case. Let $V$ be a non-zero $B$-stable subspace of $C^\infty _c (N)$, and $f$ be a non-zero function in $V$. As $f$ is compactly supported and $N$ has the decreasing open compact cover $(N_k)_{k\in \mathbb{Z}}$, we may assume the support of $f$ is contained in $N_k$ for some integer $k$. Write $V_k$ for the subspace of $V$ consisting of functions supported in $N_k$, we have $V_k \neq 0$. By \cite[Lemma 1]{B-L95}, we know $V^{N_k} _k \neq 0$. This shows that $V$ contains the characteristic function $1_{N_k}$ of $N_k$.

Now for any $n \in \mathbb{Z}, u\in N$, we have $u\alpha^{n} \cdot 1_{N_k} =\varepsilon(\alpha^{-n}) 1_{N_{k-2n} u^{-1}}$, and as $V$ is $B$-stable we conclude $V$ contains $1_{N_{k-2n} u^{-1}}$. Note that $1_{N_{k-1}}= \sum_{u\in N_{k-1}/N_k}  u^{-1} \cdot 1_{N_k}$, so we have $1_{N_{k-1}} \in V$. Repeating the previous process, we conclude $V$ contains $1_{N_{k-2n-1} u^{-1}}$. In all we have shown $V$ contains all the functions $1_{N_k u}$ for any $k\in \mathbb{Z}$ and $u \in N$. As all the functions $\{1_{N_k u} \mid k \in \mathbb{Z}, u\in N\}$ span the underlying space of $C^\infty _c (N)$, we get $V= C^\infty _c (N)$.
\end{proof}

\subsection{Proof of Theorem \ref{main result for non-ss: intro}}
We now come to the main input of this section.

\begin{theorem}\label{main result for non-supersingular}
Let $\pi$ be any smooth representation of $G$. The restriction map induces an isomorphism between the following spaces:
\begin{center}
$\textnormal{Hom}_G (\textnormal{ind}^G _B \varepsilon, \pi) \cong \textnormal{Hom}_B (\kappa_\varepsilon, \pi)$
\end{center}
\end{theorem}

\begin{proof}
We show firstly that the restriction map is injective.

Given $\phi_1$ and $\phi_2$ in the space $\textnormal{Hom}_G (\textnormal{ind}^G _B \varepsilon, \pi)$, suppose that $\phi= \phi_1 - \phi_2$ vanishes at the subspace $\kappa_\varepsilon$. By the remark proceeding Lemma \ref{irr of kappa}, $\phi$ induces a $B$-map from the character $\varepsilon$ to $\pi$, for which we still denote by $\phi$.

\begin{lemma}\label{space of B-maps from a character}
If $\varepsilon \neq \eta\circ \textnormal{det}$ for any character $\eta$, then $\textnormal{Hom}_B (\varepsilon, \pi) =0$.
\end{lemma}
\begin{proof}
Assume $\phi \neq 0$. As $\pi$ is smooth, the vector $\phi (1)\in \pi$ is fixed by some $N'_{m_K +2k}$ for large enough $k$. Using the following identity repeatedly
\begin{center}
$\alpha N'_{m_K +2k-2} \alpha^{-1} = N'_{m_K +2k}$
\end{center}
and $\phi (\alpha \cdot 1)= \varepsilon(\alpha)\phi (1)= \alpha\cdot \phi(1)$, we see $\phi(1)$ is fixed by $N'$. As the group $G$ is generated by $B$ and $N'$, we see $\varepsilon$ extends uniquely to a character of $G$ (put $\varepsilon (N')=1$). In such a situation, $\textnormal{Hom}_B (\varepsilon, \pi) \cong\textnormal{Hom}_G (\varepsilon, \pi)$.
\end{proof}

\begin{remark}\label{remarks on regular character}
Under the same assumption on $\varepsilon$, the Lemma implies that any non-zero map in $\textnormal{Hom}_B (\textnormal{ind}^G _B \varepsilon, \pi)$ is an injection. Take $\pi= \textnormal{ind}^G _B \varepsilon$. We conclude that $\textnormal{End}_B (\textnormal{ind}^G _B \varepsilon)$ is one-dimensional. This is because, by the proceeding remark, any non-zero map in the former space will induce a non-zero map in $\textnormal{End}_B (\kappa_\varepsilon)$ which is one-dimensional by Lemma \ref{irr of kappa}. We deduce that $\textnormal{End}_B (\textnormal{ind}^G _B \varepsilon) \cong \textnormal{End}_G (\textnormal{ind}^G _B \varepsilon)$.
\end{remark}

We are done if $\varepsilon \neq \eta\circ \text{det}$ for any character $\eta$. Assume $\varepsilon =\eta\circ \text{det}$ for some $\eta$. After a twist we may assume $\eta=1$. If $\phi\neq 0$, it induces a non-zero map in $\textnormal{Hom}_B (1, \pi)$, which by the argument of Lemma \ref{space of B-maps from a character} is in $\textnormal{Hom}_G (1, \pi)$. This implies the map $\phi \in \textnormal{Hom}_G (\textnormal{ind}^G _B 1, \pi)$ realizes the trivial character of $G$ as a quotient of $\textnormal{ind}^G _B 1$, which is not true.

\smallskip

We proceed to prove the restriction map is surjective.

Recall again that the space $(\textnormal{ind}^G _B \varepsilon)^{I_{1, K}}$ is two dimensional with a basis of functions $g_1$ and $g_2$ characterized by: $g_1 (Id)=1, g_1 (\beta_K)=0, g_2 (Id)=0, g_2 (\beta_K)=1$. By Proposition \ref{action of S_- and S_K on principal series}, we have
\begin{equation}\label{S_+ g_2}
S_+ g_2 = \varepsilon (\alpha) g_2
\end{equation}
Then, by Lemma \ref{the K-sub generated by S+ v} the $K$-representation $\langle K\cdot g_2 \rangle= \langle K\cdot S_+ g_2 \rangle$ is a weight, denoted by $\sigma$, of dimension greater than one (note that $I_K$ acts on $g_2$ by the character $\varepsilon^s _0$).

Let $\phi$ be a non-zero $B$-map from $\kappa_\varepsilon$ to $\pi$. The function $g_2$ by definition is supported on $B\beta_K I_K$ so it lies in $\kappa_\varepsilon$. As $\kappa_\varepsilon$ is irreducible (Lemma \ref{irr of kappa}), we have $\phi (g_2)$ is non-zero. Since $\phi$ respects the action of $B$, the vector $\phi (g_2)$ is fixed by $B \cap I_{1, K}$. Now we compute $\phi (S_+ g_2)$:
\begin{center}
$\phi (S_+ g_2)=  \varepsilon(\alpha)\phi (g_2)= S_+ \phi (g_2) $
\end{center}
that is
\begin{equation}\label{eigenvalue of phi(g_2)}
\phi (g_2)= \varepsilon(\alpha)^{-1}S_+ \phi (g_2)
\end{equation}
As $\pi$ is smooth, the vector $\phi(g_2)$ is fixed by some $N'_{m_K +2k}$ for $k$ large enough. Now by applying the argument of Lemma \ref{exchange lemma} to the above equality, we see $\phi(g_2)$ is fixed by $N'_{m_K +2k-2}$. Repeating such a process enough times, we prove that the vector $\phi (g_2)$ is fixed by $N'_{m_K}$.
By Iwahori decomposition $I_{1, K}= N'_{m_K}\cdot(B \cap I_{1, K})$, we conclude that $\phi (g_2)$ is fixed by $I_{1, K}$.

By Lemma \ref{the K-sub generated by S+ v} again the representation $\langle K\cdot \phi (g_2)\rangle$ is a weight $\sigma'$ of dimension greater than one. \emph{We claim that $\sigma \cong \sigma'$}. The Iwahori group $I_K$ acts on the vector $\phi (g_2)$ by $\varepsilon^s _0$. If $\varepsilon_0\neq \eta\circ \text{det}$ for $K= K_0$ (resp, $\varepsilon_0\neq \varepsilon^s _0$ for $K= K_1$), the claim follows as in this case a weight is determined by the character of $I_K$ acting on its $I_{1, K}$-invariants. If it is in the other case, we are also done: neither $\sigma$ or $\sigma'$ is a one-dimensional character, and as quotients of the principal series $\text{Ind}^K _{I_K} \varepsilon^s _0$ they are both isomorphic to $st\otimes\varepsilon_0$.

By \cite[Proposition 4.15, 4.16]{X2018b}, we have an isomorphism, unique up to a scalar,
\begin{center}
$\text{ind}^G _K \sigma/ (T_\sigma - \varepsilon (\alpha)) \cong \text{ind}^G _B \varepsilon$.
\end{center}
As the representation $\langle G\cdot \phi(g_2)\rangle$ contains the weight $\sigma$, it is therefore a quotient of the above representation (by \eqref{eigenvalue of phi(g_2)} and that $\sigma$ is not a character). We have shown $\phi$ extends to a $G$-map as required.
\end{proof}

With the last Theorem proved, we conclude with the following corollaries.

\begin{corollary}\label{restriction to borel for irre principal series}
Suppose $\varepsilon\neq \eta\circ \textnormal{det}$ for any character $\eta$. Then
\begin{center}
$\textnormal{Hom}_G (\textnormal{ind}^G _B \varepsilon, \pi) \cong \textnormal{Hom}_B (\textnormal{ind}^G _B \varepsilon, \pi)$
\end{center}
\end{corollary}

\begin{proof}
Let $\phi \in \textnormal{Hom}_B (\textnormal{ind}^G _B \varepsilon, \pi)$ be non-zero. By Remark \ref{remarks on regular character}, we know $\phi$ is injective. We have:

1).~ Using Remark \ref{remarks on regular character} again, the image of $\phi$ is contained in $\langle G\cdot \phi(\kappa_\varepsilon) \rangle$.

2).~ Applying Theorem \ref{main result for non-supersingular} to $\pi'= \langle G\cdot \phi(\kappa_\varepsilon) \rangle$ and using irreducibility of $\textnormal{ind}^G _B \varepsilon$, there is an isomorphism $\text{ind}^G _B \varepsilon \cong \langle G\cdot \phi(\kappa_\varepsilon) \rangle$.

By 1), the map $\phi$ lies in $\textnormal{Hom}_B (\textnormal{ind}^G _B \varepsilon, \langle G\cdot \phi(\kappa_\varepsilon) \rangle)$ but by 2) the latter space is isomorphic to $\textnormal{End}_B (\textnormal{ind}^G _B \varepsilon)$. We deduce that $\phi$ is $G$-equivariant by the last assertion of Remark \ref{remarks on regular character}.
\end{proof}

\begin{corollary}\label{restriction to borel for reducible principal series}
We have
\begin{center}
$\textnormal{Hom}_G (\textnormal{ind}^G _B 1, \pi) \cong \textnormal{Hom}_B (St, \pi)$
\end{center}
\end{corollary}
\begin{proof}
As $St\mid_B \cong \kappa_1$ (remarks before Lemma \ref{irr of kappa}), the assertion is a special case of Theorem \ref{main result for non-supersingular}. Note that the result can not be improved by replacing $\textnormal{ind}^G _B 1$ in the statement by $St$: the space $\textnormal{Hom}_G (\textnormal{ind}^G _B 1, \textnormal{ind}^G _B 1)\neq 0$ but $\textnormal{Hom}_G (St, \textnormal{ind}^G _B 1)=0$.
\end{proof}

\section{Supersingular representations}\label{sec: supersingular}

\subsection{Definition}

Recall we have defined the Hecke operator $T$ in subsection \ref{subsec: definition of T}. To define the supersingular representations, we modify it in the following way. If $\textnormal{dim}~\sigma= 1$, we put $T_\sigma= T +1$; otherwise, we put $T_\sigma= T$. Note that this modification reflects the fact the group $G$ has two maximal compact open subgroups, up to conjugacy.

\begin{definition}\label{super reps}

An irreducible smooth $\overline{\mathbf{F}}_p$-representation $\pi$ of $G$ is called supersingular if it is a quotient of $\textnormal{ind}^G _K \sigma/(T_\sigma)$, for some weight $\sigma$ of $K$.
\end{definition}

\subsection{A key property}
Let $\pi$ be an irreducible smooth representation of $G$, and $\sigma$ be a weight of $K$ contained in $\pi$. By \cite[Theorem 1.1]{X2018a}, the representation $\pi$ admits Hecke eigenvalues for the spherical Hecke algebra $\mathcal{H}(K, \sigma)\cong \overline{\mathbf{F}}_p [T_\sigma]$. This implies that the representation $\pi$ is a quotient of $\text{ind}^G _K \sigma/ (T_\sigma -\lambda)$, for some scalar $\lambda$. By definition \ref{super reps}, the representation $\pi$ is supersingular if $\lambda=0$.

\begin{lemma}\label{nipotent}
Let $\pi$ be a supersingular representation of $G$, and assume $\phi$ is a non-zero $G$-map from $\textnormal{ind}^G _K \sigma$ to $\pi$. Then, for large enough $k\geq 1$, we have
\begin{center}
$\phi\circ T_{\sigma}^k =0.$
\end{center}
\end{lemma}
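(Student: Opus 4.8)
The plan is to leverage the defining property of supersingularity together with the structure of the Hecke algebra $\mathcal{H}(K, \sigma) \cong \overline{\mathbf{F}}_p[T_\sigma]$. Since $\pi$ is supersingular, by Definition \ref{super reps} and the discussion in the "key property" subsection, $\pi$ is a quotient of $\textnormal{ind}^G_K \sigma / (T_\sigma)$; equivalently, $T_\sigma$ acts as zero on the image of the canonical map $\textnormal{ind}^G_K \sigma \twoheadrightarrow \pi$ induced by the inclusion $\sigma \hookrightarrow \pi$. The first step is to reduce the general $\phi$ to this canonical situation: the space $\textnormal{Hom}_G(\textnormal{ind}^G_K \sigma, \pi) \cong \textnormal{Hom}_K(\sigma, \pi)$ is a module over $\mathcal{H}(K, \sigma) = \overline{\mathbf{F}}_p[T_\sigma]$ (acting by precomposition), and I want to show this module is annihilated by a power of $T_\sigma$.

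The key observation is that $\textnormal{Hom}_K(\sigma, \pi)$ is finite-dimensional over $\overline{\mathbf{F}}_p$. This holds because $\pi$, being irreducible and smooth over $\overline{\mathbf{F}}_p$, is admissible (this is known for $p$-modular representations of reductive $p$-adic groups, and in any case $\sigma$-isotypic vectors in $\pi$ form a finite-dimensional space since $\pi^{K^1}$ is finite-dimensional — $\pi$ being generated by a single weight means $\pi^{K^1}$ is a finitely generated module over the finite algebra $\overline{\mathbf{F}}_p[\Gamma_K]$, hence finite-dimensional). Granting finite-dimensionality, $T_\sigma$ acts as a linear endomorphism on the finite-dimensional space $\textnormal{Hom}_G(\textnormal{ind}^G_K \sigma, \pi)$. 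It therefore suffices to show that this endomorphism is \emph{nilpotent}, for then $\phi \circ T_\sigma^k = T_\sigma^k \cdot \phi = 0$ for $k$ at least the dimension.

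To see nilpotency, suppose not: then $T_\sigma$ has a nonzero eigenvalue $\mu \in \overline{\mathbf{F}}_p$ on $\textnormal{Hom}_G(\textnormal{ind}^G_K \sigma, \pi)$, giving a nonzero $G$-map $\textnormal{ind}^G_K \sigma / (T_\sigma - \mu) \to \pi$. Since $\pi$ is irreducible, this map is surjective, so $\pi$ is a quotient of $\textnormal{ind}^G_K \sigma / (T_\sigma - \mu)$ with $\mu \neq 0$. But $\pi$ is also a quotient of $\textnormal{ind}^G_K \sigma / (T_\sigma)$ by supersingularity. By the classification results invoked in the "key property" subsection (the works \cite{X2018a}, \cite{X2018b}), the eigenvalue of $T_\sigma$ — more precisely, whether it is zero or nonzero — is an invariant of the irreducible representation $\pi$ together with the weight $\sigma$; a supersingular $\pi$ cannot simultaneously be a quotient for $\mu = 0$ and for some $\mu \neq 0$. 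This contradiction forces all eigenvalues of $T_\sigma$ on $\textnormal{Hom}_G(\textnormal{ind}^G_K \sigma, \pi)$ to be zero, i.e. $T_\sigma$ is nilpotent there, which is exactly the claim.

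The main obstacle I anticipate is pinning down precisely the input from \cite{X2018a}, \cite{X2018b} that guarantees the eigenvalue dichotomy: one needs that a single irreducible $\pi$ containing $\sigma$ determines the set of scalars $\lambda$ for which $\pi$ is a quotient of $\textnormal{ind}^G_K \sigma/(T_\sigma - \lambda)$, or at least that this set cannot contain both $0$ and a nonzero value. An alternative, perhaps cleaner route that avoids citing the full classification: work directly inside $\pi$ with the vector $v_0 \in \sigma^{I_{1,K}} \subseteq \pi^{I_{1,K}}$ and the formula for $T$ (Proposition \ref{T[Id, v_0]}), showing by a support/filtration argument on $\pi$ restricted to a large compact subgroup that iterated application of $T_\sigma$ eventually lands in the image of smaller and smaller congruence subgroups until it vanishes by smoothness — but this risks being more delicate, so I would present the Hecke-module argument above as primary and only fall back to the explicit computation if the classification statement is not in the exact form needed.
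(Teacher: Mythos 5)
Your argument is essentially the paper's argument, recast in linear-algebra language: in both cases the key inputs are (A) a finiteness statement that lets $T_\sigma$ be treated as an operator on a finite object, and (B) the fact that a supersingular $\pi$ can only be a quotient of $\textnormal{ind}^G_K\sigma/(T_\sigma-\lambda)$ for $\lambda=0$. The paper packages (A) as the existence of a non-constant polynomial $P$ with $\phi\circ P(T_\sigma)=0$, citing \cite[Corollary 4.2]{X2018a}, and then shows every root of a minimal such $P$ must be zero by peeling off one factor $(X-\lambda)$ at a time and invoking \cite[Theorem 1.1]{X2018b}; your eigenvalue argument on $\textnormal{Hom}_G(\textnormal{ind}^G_K\sigma,\pi)$ is the same move with the same ingredient (B). You correctly anticipate that the needed dichotomy is precisely what \cite[Theorem 1.1]{X2018b} supplies.

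The one genuine flaw is in your backup justification of finite-dimensionality. The parenthetical claim that ``$\pi$ being generated by a single weight means $\pi^{K^1}$ is a finitely generated module over $\overline{\mathbf{F}}_p[\Gamma_K]$'' is false as stated: $\textnormal{ind}^G_K\sigma$ is itself generated by $\sigma$ over $G$, yet its $K^1$-invariants are infinite-dimensional, so $G$-generation by a weight carries no bound on $K^1$-invariants. What you actually need is admissibility of the \emph{irreducible} representation $\pi$, which is a nontrivial theorem in the mod-$p$ setting (not a formal consequence of being generated by a weight), or else the finiteness result the paper cites directly, \cite[Corollary 4.2]{X2018a}, which is tailored to give a killing polynomial without passing through full admissibility. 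If you lean on admissibility, you must cite it; if you lean on the backup argument as written, the proof has a hole. Replacing that parenthetical by the reference to \cite[Corollary 4.2]{X2018a} (or to the admissibility result for $U(2,1)$) closes the gap, and the rest of your proof then coincides with the paper's.
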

\begin{proof}
By \cite[Corollary 4.2]{X2018a}, there is a non-constant polynomial $P(X)$ such that $\phi\circ P(T_\sigma)=0$. Assume $P(X)$ is such a polynomial of minimal degree. Take a root $\lambda$ of $P(X)$, and write $P(X)= (X- \lambda) P_1 (X)$. Put $\phi'= \phi \circ P_1 (T_\sigma)$. Note that $\phi'$ is still a $G$-map from $\textnormal{ind}^G _K \sigma$ to $\pi$. By our assumption, the map $\phi'$ is non-zero and factors through $\textnormal{ind}^G _K \sigma / (T_\sigma -\lambda)$. As $\pi$ is supersingular, we have $\lambda =0$ (\cite[Theorem 1.1]{X2018b}). We conclude $P(X)= X^n$ for some $n\geq 1$.
\end{proof}

\smallskip
\begin{lemma}\label{the K-sub generated by S+ v}
Let $\pi$ be a smooth representation of $G$. Assume $v$ is a non-zero vector in  $\pi^{I_{1, K}}$, such that $I_K$ acts on $v$ as a character. Then,  either $S_+ v=0$, or $S_+ v$ generates a weight of $K$ of dimension greater than one.
\end{lemma}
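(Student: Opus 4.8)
Set $w:=S_{+}v$. If $w=0$ there is nothing to prove, so assume $w\neq 0$ and put $W:=\langle K\cdot w\rangle\subseteq\pi$. By Proposition~\ref{S_K and S_- preserve I_1} we have $w\in\pi^{I_{1,K}}$, and a short computation from the definitions of $S_{-}$ and $S_{K}$ --- using that $\beta_{K}$ normalises $H$, that $\alpha$ commutes with $H$, and that $\beta_{K}^{2}=\mathrm{Id}$ (this follows from $\beta^{2}=\mathrm{Id}$ and $\beta^{-1}\alpha\beta=\alpha^{-1}$) --- shows that $I_{K}$ acts on $w$ through the same character $\chi$ as on $v$. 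The plan is to reduce the claim to a single vanishing statement. Suppose, towards a contradiction, that every Jordan--H\"older constituent of $W$ is one-dimensional; then the image of $\Gamma_{K}$ in $\mathrm{GL}(W)$ lies in a Borel subgroup, so the image of the derived subgroup $\Gamma_{K}^{\mathrm{der}}$ --- being generated by commutators --- lies in the unipotent radical of that Borel, hence is a finite $p$-group. But $\Gamma_{K}^{\mathrm{der}}$ is a quasi-simple group over $k_{F}$ (here $|k_{F}|\geq 3$ because $p$ is odd; the case $|k_{F}|=3$ with $K=K_{1}$ needs a direct check), so it has no non-trivial $p$-group quotient and therefore acts trivially on $W$. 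Consequently $W$ is a direct sum of characters of $\Gamma_{K}$, and the image of $N'\cap I'_{1,K}=N'_{m_{K}-1}$ (contained in $\Gamma_{K}^{\mathrm{der}}$) acts trivially on $w$, i.e.\ $w\in\pi^{N'_{m_{K}-1}}$. It therefore suffices to prove
\[
\text{if}\quad S_{+}v\in\pi^{N'_{m_{K}-1}},\quad\text{then}\quad S_{+}v=0 .
\]

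Granting this, not all Jordan--H\"older constituents of $W=\langle K\cdot S_{+}v\rangle$ are one-dimensional (otherwise $w\in\pi^{N'_{m_{K}-1}}$ and so $w=0$), so $S_{+}v$ generates a $K$-subrepresentation of $\pi$ having among its constituents a weight of dimension greater than one, which is the assertion. To prove the displayed implication I would unwind $w=S_{K}\circ S_{-}(v)$: directly from the definitions,
\[
w=\sum_{u\in N_{n_{K}}/N_{n_{K}+1}}\;\sum_{u'\in N'_{m_{K}}/N'_{m_{K}+1}}u\,\bigl(\beta_{K}u'\beta_{K}^{-1}\bigr)\,\alpha^{-1}v ,
\]
and $\beta_{K}u'\beta_{K}^{-1}$ runs through cosets of upper-unipotent subgroups of $N$, so $w$ is a finite sum of $G$-translates $g_{i}\,\alpha^{-1}v$. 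Assuming $w\in\pi^{N'_{m_{K}-1}}$, I would choose $z$ in a non-trivial coset of $N'_{m_{K}}$ inside $N'_{m_{K}-1}$, rewrite $z\cdot w-w$ by commuting $z$ past each $g_{i}$ by means of the identity~\eqref{useful identity}, and check that the resulting sum of $G$-translates of $v$ can vanish only if $w=0$. Here the hypothesis that $I_{K}$ acts on $v$ by a character enters, in order to determine the $I_{1,K}$-stabiliser and the support of each translate that occurs.

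The main obstacle is precisely this last computation, which must be carried out separately for $K_{0}$ (where $n_{K_{0}}=0$, $m_{K_{0}}=1$ and $\beta_{K_{0}}=\beta$) and for $K_{1}$ (where $n_{K_{1}}=-1$, $m_{K_{1}}=2$ and $\beta_{K_{1}}=\beta'$), since the filtration levels and the distinguished representative of the Weyl element differ in the two cases; the remainder of the argument is formal. A possibly cleaner alternative is to compare $w=S_{+}v$ with the Hecke operator via Proposition~\ref{T[Id, v_0]} inside the universal module $\mathrm{ind}^{G}_{K}\sigma$ --- where $v$ is the image of some $v_{0}\in\sigma^{I_{1,K}}$ --- and then transfer the conclusion to an arbitrary smooth $\pi$ by naturality of $S_{+}$.
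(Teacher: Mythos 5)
Your approach is genuinely different from the paper's, but it has two real gaps.

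First, and most importantly, you have not actually proved anything: you reduce the lemma to the implication ``$S_{+}v\in\pi^{N'_{m_K-1}}\Rightarrow S_{+}v=0$'', and then explicitly state that you ``would unwind'' the operators and ``would choose'' a representative $z$ and ``check'' that the resulting sum vanishes only if $w=0$, admitting that ``the main obstacle is precisely this last computation.'' That computation is the entire substance of the lemma, and it is left undone. The paper avoids this computation altogether by reducing, via Frobenius reciprocity, to a universal statement inside $\mathrm{Ind}^{K}_{I_K}\chi^{s}$: with $v'=S_{-}v$, the $K$-representation $\langle K\cdot S_{+}v\rangle=\langle K\cdot S_{K}v'\rangle$ is a quotient of $\langle K\cdot S_{K}\varphi_{\chi^s}\rangle$, and \cite[Proposition~5.7]{Karol-Peng2012} says the latter is an irreducible weight of dimension greater than one.

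Second, even granting your missing computation, what you establish is strictly weaker than the lemma. Your contrapositive argument shows only that not every Jordan--H\"older constituent of $W=\langle K\cdot S_{+}v\rangle$ is one-dimensional, i.e.\ that $W$ \emph{contains a constituent} of dimension greater than one. But the lemma asserts that $W$ itself \emph{is} a weight (an irreducible $K$-representation) of dimension greater than one, and this irreducibility is used essentially downstream: in Proposition~\ref{nilpotent of S+} one writes $\sigma=\langle K\cdot S_{+}v\rangle$ and forms $\mathrm{ind}^{G}_{K}\sigma$ in order to invoke Lemma~\ref{nipotent} (which requires $\sigma$ to be a weight), and in Theorem~\ref{main result for non-supersingular} one compares two such $\sigma,\sigma'$ as weights. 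Your argument provides no mechanism to get irreducibility of $W$; the paper's does, precisely because it imports it from Karol--Pa\v{s}k\={u}nas. Finally, a smaller caveat: the Lie--Kolchin/perfectness step is plausible but your description of $\Gamma_{K}^{\mathrm{der}}$ as quasi-simple is stated without justification for $K=K_{1}$ (the non-hyperspecial reductive quotient is not the same group), so that piece would also need to be made precise case by case.
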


\begin{proof}
Assume $S_+ v\neq 0$. We put $w= S_- v$. By definition,
\begin{center}
$S_+ v= S_K w$,
\end{center}
and we see $w$ must be non-zero. Consider the $K$-representation $\kappa= \langle K\cdot w\rangle$. As $I_K$ acts on $v$ by a character $\chi$, $I_K$ acts on $w$ by $\chi^s$ (Proposition \ref{S_K and S_- preserve I_1}). By Frobenius reciprocity, there is a surjective $K$-map from $\text{Ind}^K _{I_K} \chi^s$ to $\kappa$, sending $\varphi_{\chi^s}$ to $w$. Here, $\varphi_{\chi^s}$ is the function in $\text{Ind}^K _{I_K} \chi^s$ supported on $I_K$ and having value $1$ at $Id$.

Via aforementioned map, we see $\langle K\cdot S_+ v \rangle$ is the image of $\langle K \cdot S_K \varphi_{\chi^s} \rangle $.  But the latter, by \cite[Proposition 5.7]{Karol-Peng2012}, is an irreducible smooth representation of $K$ of dimension greater than one. The assertion follows.
\end{proof}

\begin{proposition}\label{nilpotent of S+}
 Assume $\pi$ is a supersingular representation of $G$, and $v$ is a non-zero vector in $\pi^{I_{1, K}}$.  Then, for $k\gg 0$, we have $S^k _+ v= 0$.
\end{proposition}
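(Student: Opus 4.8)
The plan is to turn the iteration of $S_+$ into the action of the Hecke operator $T$ on a suitable compact induction, and then to read off nilpotency from supersingularity via Lemma \ref{nipotent}. First I reduce to the case where $I_K$ acts on $v$ by a character $\chi$: since $I_K/I_{1,K}$ is finite abelian of order prime to $p$, the space $\pi^{I_{1,K}}$ is the direct sum of its $I_K$-eigenspaces, and it suffices to annihilate each eigencomponent of $v$ by a power of $S_+$. A short computation from the definitions of $S_K$ and $S_-$ (using that conjugation by $\beta_K$ induces an involution of the diagonal torus) shows that $S_+$ preserves the $\chi$-eigenspace of $\pi^{I_{1,K}}$; combined with Proposition \ref{S_K and S_- preserve I_1}, every iterate $S_+^j v$ is again an $I_{1,K}$-fixed $\chi$-eigenvector. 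If $S_+ v = 0$ there is nothing to prove, so put $w_0 := S_+ v \neq 0$. By Lemma \ref{the K-sub generated by S+ v} applied to $v$ (and then to each $S_+^j v$), $\tau := \langle K\cdot w_0\rangle$ is an \emph{irreducible} weight of $K$ of dimension $>1$ with $w_0 \in \tau^{I_{1,K}}$, and likewise each nonzero $S_+^j w_0$ generates an irreducible weight of dimension $>1$.

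The crux consists of two facts. (a) A direct manipulation inside $N$ and $N'$ --- using the identity \eqref{useful identity}, the relations $\beta_K^2 = 1$ and $\beta_K\, n'(x,y)\,\beta_K = n(-\bar{x},y)$ for $K = K_0$ (with the analogous $\alpha$-twisted identity for $K = K_1$, since $\beta_{K_1} = \beta\alpha^{-1}$), and the index identity $n_K + m_K = 1$ --- shows that, as $u$ runs over $N_{n_K}/N_{n_K+1}$ and $u'$ over $N'_{m_K}/N'_{m_K+1}$, the products $u\,(\beta_K u'\beta_K)$ run exactly once over the cosets of $N_{n_K+2}$ in $N_{n_K}$; hence on $\pi^{N_{n_K}}$ one has the operator identity
\[
 S_+ \;=\; \sum_{u \,\in\, N_{n_K}/N_{n_K+2}} u\, \alpha^{-1}(\,\cdot\,),
\]
so that $S_+^k w_0 = \sum_{u_1,\dots,u_k} u_1\alpha^{-1}\cdots u_k\alpha^{-1}w_0$ for all $k$. (b) Every irreducible weight of $K$ of dimension $>1$ has $\lambda_{\beta_K,\,\cdot} = 0$ (just as for $GL_2$, where this scalar vanishes on every $\mathrm{Sym}^r$ with $r\geq 1$); this can be quoted from the classification of weights of $\Gamma_K$. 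Combining (b) with Proposition \ref{T[Id, v_0]} for the Hecke algebra $\mathcal{H}(K,\tau) = \overline{\mathbf{F}}_p[T]$ gives $T\hat{f}_{w_0} = \sum_{u\in N_{n_K}/N_{n_K+2}} u\,\alpha^{-1}\hat{f}_{w_0}$, and then, by $G$-equivariance of $T$, $T^k\hat{f}_{w_0} = \sum_{u_1,\dots,u_k} u_1\alpha^{-1}\cdots u_k\alpha^{-1}\hat{f}_{w_0}$ in $\text{ind}^G_K\tau$.

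To conclude, let $\phi:\text{ind}^G_K\tau\to\pi$ be the $G$-map attached by Frobenius reciprocity to the inclusion $\tau\hookrightarrow\pi$, so $\phi(\hat{f}_{w_0}) = w_0 \neq 0$; applying $\phi$ and comparing with (a) yields $\phi(T^k\hat{f}_{w_0}) = S_+^k w_0$ for all $k\geq 0$. Since $\tau$ has dimension $>1$, the modified operator $T_\tau$ equals $T$ (the modification of \cite[4.1]{X2018b} being relevant only for certain one-dimensional weights), so, $\pi$ being supersingular, Lemma \ref{nipotent} gives $\phi\circ T^k = 0$ for $k\gg 0$. Therefore $S_+^k w_0 = \phi(T^k\hat{f}_{w_0}) = 0$, i.e. $S_+^{k+1}v = 0$, for $k\gg 0$; summing over the finitely many eigencomponents of the original $v$ finishes the proof.

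I expect the main obstacle to be the combinatorics underlying the operator identity in (a): one must verify that the double product $u\,(\beta_K u'\beta_K)$ is a system of coset representatives for $N_{n_K+2}$ in $N_{n_K}$, which is sensitive to the (uneven) behaviour of the Heisenberg-type filtration $\{N_k\}$ --- the indices $[N_k:N_{k+1}]$ depend on the parity of $k$ --- and, for $K = K_1$, to the extra conjugations introduced by $\beta_{K_1} = \beta\alpha^{-1}$. The auxiliary inputs --- vanishing of $\lambda_{\beta_K,\sigma}$ for $\dim\sigma>1$, and the fact that the eigenvalue condition may be tested on $T$ itself for such weights rather than on the modified $T_\sigma$ --- are routine and can be extracted from \cite{X2019} and \cite{X2018b} or checked on the finite list of weights.
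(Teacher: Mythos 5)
Your proof is correct and takes essentially the same route as the paper's, but you make explicit several steps that the paper leaves implicit. The paper's own argument is quite terse: after applying Lemma \ref{the K-sub generated by S+ v} and Frobenius reciprocity to get $\phi:\textnormal{ind}^G_K\sigma\to\pi$ with $\phi(\hat{f}_{S_+v})=S_+v$, it simply invokes Lemma~\ref{nipotent} and concludes $S_+^kv=0$; this jump hides exactly the relation $\phi(T_\sigma^k\hat{f}_{S_+v})=S_+^{k+1}v$ that you spell out via your ingredients (a) the operator identity $S_+|_{\pi^{N_{n_K}}}=\sum_{u\in N_{n_K}/N_{n_K+2}}u\alpha^{-1}(\cdot)$, (b) $\lambda_{\beta_K,\sigma}=0$ for $\dim\sigma>1$ combined with Proposition~\ref{T[Id, v_0]}, and (c) $T_\sigma=T$ for such $\sigma$. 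These are indeed the facts the paper is implicitly using. Your reduction to $I_K$-eigencomponents and the final bookkeeping match the paper's proof, and your concern about the coset combinatorics in (a) is well-placed but resolvable (it is the same combinatorics underlying the formula in Proposition~\ref{T[Id, v_0]}).
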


\begin{proof}
Assume firstly $I_K$ acts on $v$ as a character $\chi$.

Assume $S_+ v\neq 0$. By Lemma \ref{the K-sub generated by S+ v} the $K$-subrepresentation generated by $S_+ v$ is a weight of dimension greater than one, and denote it by $\sigma$. By Frobenius reciprocity, we have a $G$-map $\phi$ from $\textnormal{ind}^G _K \sigma$ to $\pi$, sending the function $\hat{f}_{S_+ v}$ to $S_+ v$. From Lemma \ref{nipotent}, there is some $k\geq 1$ such that
\begin{center}
$S_+ ^k v=0,$
\end{center}
and we are done in this special case.

Note that $I_K/I_{1,K}$ is an abelian group of finite order prime to $p$. For any non-zero $v\in \pi^{I_{1,K}}$, the $I_K$-representation $\langle I_K\cdot v \rangle$ generated by $v$ is a finite sum of characters, and we may write $v$ as $\sum v_i$ so that $I_K $ acts on $v_i$ by a character $\chi_i$ of $I_K/I_{1, K}$. We then apply the previous process to each $v_i$, and take the largest $k_i$. We are done.
\end{proof}

\subsection{A criteria of Pa$\check{\text{s}}$k$\bar{\text{u}}$nas}

In this part, for an irreducible smooth representation we prove a sufficient condition under which its restriction to the Borel subgroup remains irreducible. We will verify it for supersingular ones in the next part.

\begin{proposition}\label{restriction to borel}
Let $\pi$ be an irreducible smooth representation of $G$. If, for any non-zero vector $w\in \pi$, there is a non-zero vector $v\in \pi^{I_{1, K}}\cap \langle B\cdot w\rangle$ such that
\begin{center}
 $S_+ v=0$,
\end{center}
then $\pi\mid_B$ is irreducible.
\end{proposition}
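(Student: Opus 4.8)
\emph{Proof plan.} I follow the strategy of \cite{Pas07}. The goal is to show that every non-zero $B$-subrepresentation $W$ of $\pi$ is all of $\pi$; granting this, $\pi\mid_B$ is irreducible. So fix such a $W$ and a non-zero $w\in W$. By hypothesis there is a non-zero $v\in\pi^{I_{1,K}}\cap\langle B\cdot w\rangle\subseteq W$ with $S_+v=0$. First I would reduce to the case in which $I_K$ acts on $v$ through a character. Decompose $v=\sum_\chi v_\chi$ into its $I_K/I_{1,K}$-isotypic components; since a maximal torus of $\Gamma_K$ lifts into $H\cap K\subseteq B$, the relevant idempotents lie in $\overline{\mathbf{F}}_p[H\cap K]\subseteq\overline{\mathbf{F}}_p[B]$, so each $v_\chi$ again lies in $\langle B\cdot w\rangle\subseteq W$; and a direct check from the definitions of $S_K$ and $S_-$ shows $S_+(tv)=tS_+(v)$ for $t\in H\cap K$ (each of $S_K$, $S_-$ introduces one Weyl twist, and the two cancel because $\beta_K^2$ acts trivially on $H$), so $S_+v_\chi=0$ as well. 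Replacing $v$ by a non-zero $v_\chi$, we may assume $I_K$ acts on $v$ via a character $\chi$.

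Since $v\in W$, it now suffices to prove $\langle B\cdot v\rangle=\pi$. By the Iwasawa decomposition $G=BK$ one has $\pi=\langle G\cdot v\rangle=\langle B\cdot\langle K\cdot v\rangle\rangle$, so it is enough to show $\langle K\cdot v\rangle\subseteq\langle B\cdot v\rangle$. I would obtain this from the Bruhat decomposition $K=I_K\sqcup I_K\beta_K I_K$ (valid because $\Gamma_K$ has semisimple rank one and $\beta_K$ reduces to a representative of the non-trivial Weyl element), the Iwahori factorisation $I_{1,K}=N_{n_K}\,(H\cap I_{1,K})\,N'_{m_K}$, and the identity $\beta_K^{-1}N'_{m_K}\beta_K=N_{n_K+1}\subseteq I_{1,K}$ (so that $N'_{m_K}\beta_K v=\beta_K v$): together with the fact that $I_K$ acts on $v$ by $\chi$, these give $\langle K\cdot v\rangle=\overline{\mathbf{F}}_p\,v+\langle N_{n_K}\cdot\beta_K v\rangle$. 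Since $N_{n_K}\subseteq B$, this reduces everything to the single statement $\beta_K v\in\langle B\cdot v\rangle$.

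This last point is where the hypothesis $S_+v=0$ enters, and it is the heart of the matter. The plan is as follows. Unwinding the definitions of $S_K$, $S_-$ and using $\beta_K^2=1$ together with $\beta_K N'_{m_K}\beta_K^{-1}=N_{n_K+1}$, one rewrites $S_+v=\sum_{u\in N_{n_K}/N_{n_K+2}}u\,\alpha^{-1}v$. Applying $\beta_K$ to the relation $S_+v=0$ and separating off the term of the trivial coset gives $\beta_K\alpha^{-1}v=-\sum_{u\neq 1}\beta_K u\,\alpha^{-1}v$, where each coset representative $u\neq 1$ has the form $n(x,y)$ with $y\neq 0$ and $y\in\mathfrak{p}_E^{n_K}\setminus\mathfrak{p}_E^{n_K+2}$. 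For such $u$ the identity \eqref{useful identity} rewrites $\beta_K\,n(x,y)\,\alpha^{-1}v$ as $n(\bar{y}^{-1}x,y^{-1})\,h(\bar{y}^{-1})\,n'(-\bar{y}^{-1}\bar{x},y^{-1})\,\alpha^{-1}v$, and a valuation count using $n_K+m_K=1$ shows that conjugation by $\alpha$ carries $n'(-\bar{y}^{-1}\bar{x},y^{-1})$ into $N'_{m_K}\subseteq I_{1,K}$, whence $n'(-\bar{y}^{-1}\bar{x},y^{-1})\,\alpha^{-1}v=\alpha^{-1}v$. Therefore each term on the right lies in $\langle B\cdot\alpha^{-1}v\rangle=\langle B\cdot v\rangle$ (as $\alpha\in H\subseteq B$), so $\beta_K\alpha^{-1}v\in\langle B\cdot v\rangle$; and since $\beta_K\alpha^{-1}\beta_K^{-1}=\alpha$, applying $\alpha^{-1}$ on the left gives $\beta_K v\in\langle B\cdot v\rangle$, which completes the proof.

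The hard part is the third paragraph: establishing the closed form for $S_+v$ and organising the combinatorics of \eqref{useful identity} over the cosets of $N_{n_K}/N_{n_K+2}$ so that every error term is manifestly in $\langle B\cdot v\rangle$, all of this done uniformly for $K=K_0$ and $K=K_1$. For $K_1$ the element $\beta_{K_1}=\beta\alpha^{-1}$ forces an extra conjugation by $\alpha$ throughout, and one must track precisely how conjugation by $\beta_K$ and by $\alpha$ moves the filtrations $\{N_k\}$ and $\{N'_k\}$. By contrast, the reductions in the first two paragraphs are routine once one has the Iwasawa, Bruhat and Iwahori factorisations in hand.
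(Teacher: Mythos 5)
Your proposal is correct in its essentials and rests on the same key idea as the paper: using $S_+v=0$ together with the identity \eqref{useful identity} to show $\beta_K v\in\langle B\cdot v\rangle$, and then concluding $\langle B\cdot v\rangle=\pi$ from a double-coset decomposition. The paper's route is shorter, though: it invokes the decomposition $G=BI_{1,K}\cup B\beta_K I_{1,K}$ directly, for which the bare fact $v\in\pi^{I_{1,K}}$ suffices, whereas you pass through $G=BK$ and then $K=I_K\sqcup I_K\beta_K I_K$, which forces the preliminary reduction of your first paragraph to the case where $I_K$ acts on $v$ by a character. That reduction is harmless (and your justification of it, via the equivariance $S_+(tv)=tS_+(v)$ for $t\in H\cap K$, is sound), but it is unnecessary. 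Likewise, in the final computation the paper rewrites the relation $S_+v=0$ as $\beta_K v=-\sum_{u}\beta_K\alpha u\alpha^{-1}v$ over the nontrivial cosets and observes $\beta_K\alpha u\alpha^{-1}\in BN'_{m_K}$ outright, which handles $K_0$ and $K_1$ in one stroke; your version, which keeps the operator as $\beta_K u\alpha^{-1}$, requires the extra $\alpha$-bookkeeping for $\beta_{K_1}=\beta\alpha^{-1}$ that you flag at the end, but the valuation count you describe does go through. In short: same argument, a somewhat more circuitous packaging.
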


\begin{proof}
Let $w$ be a non-zero vector in $\pi$. As $\pi$ is smooth, there exists a $k\geq 0$ such that $w$ is fixed by $N'_{2k+m_K}$. Hence, the vector $w_1=\alpha^{-k} w$ is fixed by $N'_{m_K}$. Since $I_{1, K}= (I_{1, K}\cap B)\cdot N'_{m_K}$, we see
\begin{center}
$\langle I_{1, K}\cdot w_1\rangle= \langle(I_{1, K}\cap B)\cdot w_1\rangle.$
\end{center}

As $I_{1, K}$ is pro-$p$, the space $\langle(I_{1, K}\cap B)\cdot w_1\rangle$ has non-zero $I_{1, K}$-invariant (\cite[Lemma 1]{B-L95}). We conclude that $\pi^{I_{1, K}}\cap \langle B\cdot w\rangle\neq 0$.

\begin{lemma}
If $S_+ v=0$, then $\beta_K v\in \langle B\cdot v\rangle.$
\end{lemma}

\begin{proof}
By the assumption $S_+ v=0$, we get
\begin{center}
$v= -\alpha\cdot \sum_{u\in (N_{n_K}\smallsetminus N_{n_K +2})/N_{n_K +2}}u\alpha^{-1} v$
\end{center}
or equivalently
\begin{equation}\label{beta_K v in  Bv}
\beta_K v= -\sum_{u\in (N_{n_K}\smallsetminus N_{n_K +2})/N_{n_K +2}}\beta_K \alpha u \alpha^{-1}v.
\end{equation}
Applying \eqref{useful identity}, we see $\beta_K \alpha u \alpha^{-1} \in BN'_{m_K}$, for any $u\in (N_{n_K}\smallsetminus N_{n_K +2})/N_{n_K +2}$. More precisely, for $u = n(\ast, \varpi^{n_K} _E t)\in N_{n_K}\smallsetminus N_{n_K +1}$, we have
\begin{center}
$\beta_K \alpha u \alpha^{-1}= n(\ast, \varpi^{n_K +2} _E t^{-1})h(\bar{t}^{-1})\alpha^{-2}n'(\ast, \varpi^{m_K +1} _E t^{-1})$;
\end{center}
for $u= n(\ast, \varpi^{n_K +1} _E t) \in N_{n_K +1}\smallsetminus N_{n_K +2}$, we have
\begin{center}
$\beta_K \alpha u \alpha^{-1}= n(\ast, \varpi^{n_K +1} _E t^{-1})h(\bar{t}^{-1})\alpha^{-1}n'(\ast, \varpi^{m_K} _E t^{-1}).$
\end{center}
That gives $\beta_K \alpha u \alpha^{-1}v \in  \langle B\cdot v \rangle$ for all $u\in (N_{n_K}\smallsetminus N_{n_K +2})/N_{n_K +2}$. We conclude $\beta_K v \in \langle B\cdot v\rangle$ from the above equality \eqref{beta_K v in  Bv}. For our later purpose, we record \eqref{beta_K v in  Bv} in a more explicit form:
\begin{equation}\label{beta_K v in  Bv: explicit form}
\beta_K v= -\sum_u u \alpha^{-2} h(\bar{t}^{-1})v -\sum_u u \alpha^{-1} h(\bar{t}^{-1})v,
\end{equation}
where, in the first sum $u= n(\ast, \varpi^{n_K +2} _E t)$ goes through $(N_{n_K +2}\smallsetminus N_{n_K +3})/N_{n_K +4}$; in the second sum $u= n(\ast, \varpi^{n_K +1} _E t)$ goes through $(N_{n_K +1}\smallsetminus N_{n_K +2})/N_{n_K +2}$.
\end{proof}

We proceed to complete the proof of Proposition \ref{restriction to borel}. Choose $0\neq v\in \pi^{I_{1, K}}\cap \langle B\cdot w\rangle$ such that $S_+ v=0$. The above Lemma says $\beta_K v\in \langle B\cdot v\rangle$. As $\pi$ is irreducible, we have $\pi=\langle G\cdot v\rangle$. By the Bruhat decomposition $G= BI_{1, K} \cup B\beta_K I_{1, K}$, we see
\begin{center}
$\pi\subseteq\langle B\cdot v\rangle \subseteq \langle B\cdot w\rangle$.
\end{center}
Hence, we have proved $\pi= \langle B\cdot w\rangle$ for any $w\in \pi$, and the proposition follows.
\end{proof}

\begin{remark}
The condition in Proposition \ref{restriction to borel} is merely sufficient: for the Steinberg representation $St$, its restriction to $B$ is irreducible, but $S_+$ does not annihilate the line $St^{I_{1, K}}$. More precisely, the representation $St$ is defined as $\textnormal{ind}^G _B 1 /(1)$, and its $I_{1, K}$-invariants is of one dimension. In the notation of section \ref{sec: S_K and S_-}, the space $St^{I_{1, K}}$ is spanned by the image of the function $g_1$. By Proposition \ref{action of S_- and S_K on principal series}, we check that
\begin{center}
$S_+ g_1 = -g_2$
\end{center}
which is just $S_+ \overline{g_1} =\overline{g_1}$.
\end{remark}

\subsection{Proof of $(1)$ of Theorem \ref{main result on ss: intro}}

\smallskip

\begin{theorem}\label{main result on ss}
Let $\pi$ be a supersingular representation of $G$. Then $\pi\mid_B$ is irreducible.
\end{theorem}

\begin{proof}
Let $w$ be any non-zero vector in $\pi$. We already know that (from the argument of Proposition \ref{restriction to borel})
\begin{center}
$\pi^{I_{1, K}} \cap \langle B\cdot w\rangle \neq 0$.
\end{center}
Take any non-zero vector $v$ in the above space. As $\pi$ is supersingular, by Proposition \ref{nilpotent of S+} $v$ will be annihilated by $S^k _+$ for $k$ large enough. Let $m$ be the least positive integer satisfying that. Now the vector $v'= S^{m-1} _+ v$ is non-zero. By Proposition \ref{S_K and S_- preserve I_1}, $v'$ is still $I_{1, K}$-invariant, and lies in $\langle B\cdot w\rangle$ by the form of $S_+$. It satisfies
\begin{center}
$S_+ v' =0$.
\end{center}
We are done by Proposition \ref{restriction to borel}.
\end{proof}

An immediate but interesting application of Theorem \ref{main result on ss} is the following. Recall the Levi decomposition $B= H \ltimes N$, where $H$ is the diagonal subgroup and $N$ is the upper unipotent radical.
\begin{corollary}\label{vanishe of jacquet module}
For a supersingular representation $\pi$ of $G$, we have
\begin{center}
$\pi_N =0$,
\end{center}
i.e., the usual Jacquet module of $\pi$ (with respect to $B$) vanishes.
\end{corollary}
\begin{proof}
Recall that $\pi_N = \pi / \pi (N)$, and the space $\pi (N) := \langle \pi(u)\cdot v -v \mid u\in N, v\in \pi \rangle$ is $B$-stable and non-zero. The assertion follows by Theorem \ref{main result on ss}.
\end{proof}

\subsection{Proof of $(2)$ of Theorem \ref{main result on ss: intro}}

\begin{theorem}\label{hom_G= hom_B for ss}
Give two smooth representations $\pi$ and $\pi'$ of $G$. Suppose $\pi$ is supersingular. Then, we have
\begin{center}
$\textnormal{Hom}_G (\pi, \pi') \cong \textnormal{Hom} _B (\pi, \pi').$
\end{center}
\end{theorem}

\begin{proof}
As we have one side inclusion $\textnormal{Hom}_G (\pi, \pi') \hookrightarrow \textnormal{Hom} _B (\pi, \pi')$, it suffices to prove any non-zero map $\phi$ in $\textnormal{Hom} _B (\pi, \pi')$ is also a $G$-map.

We begin with a non-zero vector $v \in \pi^{I_{1, K}}$, and by replacing $v$ by some other vector in the space $\langle I_K \cdot v\rangle$, we may assume $I_K$ acts on $v$ by a character $\chi$. As $\pi \mid_B$ is irreducible (Theorem \ref{restriction to borel}), the map $\phi$ is injective whence $\phi (v) \neq 0$. As $\pi'$ is smooth, $\phi (v)$ is fixed by $N'_{m_K + 2m}$ for some $m\geq 0$.

We assume $m\geq 1$, and proceed to find a non-zero vector $w \in \pi^{I_{1, K}} \cap \langle B\cdot v\rangle$ such that $\phi (w)$ is fixed by $N'_{m_K +2m-2}$.

\textbf{\emph{Case I: $S_+ v \neq 0$.}} We take $w$ as $S_+ v$, so it lies in $\pi^{I_{1, K}} \cap \langle B\cdot v\rangle$. Note that $I_K$ acts on $w$ still by the character $\chi$. Using Lemma \ref{exchange lemma}, we see the vector $\phi (w)= S_+ \phi (v)$ is fixed by $N'_{m_K +2m-2}$.

\textbf{\emph{Case II: $S_+ v = 0$ and $S_- v \neq 0$.}} We take $S_- v$ as $w$, which lies in $\pi^{I_{1, K}}$ (Lemma \ref{S_K and S_- preserve I_1}). Our assumption $S_+ v= 0$ means
\begin{center}
$S_K w =0$,
\end{center}
which is equivalent to
\begin{center}
$w= - \sum_{u\in (N_{n_K} \smallsetminus N_{n_K +1}) /N_{n_K +2}} \beta_K u \alpha^{-1} v$
\end{center}
An application of \eqref{useful identity} gives that for $u= n(\ast, \varpi^{n_K}_E t) \in N_{n_K} \smallsetminus N_{n_K +1}$
\begin{center}
$\beta_K u \alpha^{-1}= n(\ast, \varpi^{n_K}_E t^{-1})\alpha^{-1} h(\bar{t}^{-1})n'(\ast, \varpi^{m_K +1}_E t^{-1})$,
\end{center}
hence we have $\beta_K u \alpha^{-1} \cdot v = \chi(h(\bar{t}^{-1})) n(\ast, \varpi^{n_K}_E t^{-1})\alpha^{-1} \cdot v$. In all we get
\begin{center}
$w= -\sum_{u \in (N_{n_K} \smallsetminus N_{n_K +1}) /N_{n_K +2}} \chi(h(\bar{t}^{-1}))u\alpha^{-1} \cdot v \in \langle B\cdot v\rangle$,
\end{center}
whence that
\begin{center}
$\phi(w)= -\sum_{u \in (N_{n_K} \smallsetminus N_{n_K +1}) /N_{n_K +2}} \chi(h(\bar{t}^{-1}))u\alpha^{-1} \cdot \phi(v)$.
\end{center}
This, combined with Lemma \ref{exchange lemma} (especially \eqref{u_1= uN+}) and Remark \ref{refinement of exchange lemme}, shows $\phi (w)$ is fixed by $N'_{m_K +2m-2}$.

\textbf{\emph{Case III}}: $S_- v =0$ (whence $S_+ v=0$). By the definition of $S_-$, this assumption is same as $\sum_{u\in N_{n_K +1} /N_{n_K +2}} u\alpha^{-1} \cdot v =0$. Then by almost the same argument we have
\begin{center}
$\beta_K v= -\sum_{u\in (N_{n_K +1} \smallsetminus N_{n_K +2}) /N_{n_K +2}} \chi(h(\bar{t}^{-1}))u\alpha^{-1} \cdot v \in \langle B\cdot v\rangle$.
\end{center}
Thus
\begin{center}
$\phi(\beta_K v)= -\sum_{u\in (N_{n_K +1} \smallsetminus N_{n_K +2}) /N_{n_K +2}} \chi(h(\bar{t}^{-1}))u\alpha^{-1} \cdot \phi(v)$.
\end{center}
We conclude that $\phi (\beta_K v)$ is fixed by $N'_{m_K +2m-2}$, using Lemma \ref{exchange lemma} (especially \eqref{u_1= uN+}) and Remark \ref{refinement of exchange lemme}. Based on this, one verifies further that, for any $u \in N_{n_K} / N_{n_K +1}$, the vector $\phi (u\beta_K v)$ is still fixed by $N'_{m_K +2m-2}$ , using Lemma \ref{exchange lemma} again.

 $\bullet ~ S_K v =\sum_{u \in N_{n_K} / N_{n_K +1}} u\beta_K v\neq 0$. In this case, we take $S_K v$ as $w$. As
\begin{center}
$\phi (w)=\sum_{u \in N_{n_K} / N_{n_K +1}} \phi (u\beta_K v)$,
\end{center}
we conclude that $\phi (w)$ is fixed by $N'_{m_K +2m-2}$ by the proceeding remarks.

 $\bullet ~S_K v= \sum_{u \in N_{n_K} / N_{n_K +1}} u\beta_K v =0$. This assumption gives us that
\begin{center}
$v\in \langle u\beta_K v | u \in (N_{n_K} \smallsetminus N_{n_K +1}) /N_{n_K +1}\rangle$.
\end{center}
Explicitly, under this assumption we have
\begin{center}
$v= -\sum_{u\in (N_{n_K}\smallsetminus N_{n_K +1}) /N_{n_K +1}} \chi (h(\bar{t}^{-1}))u\beta_K \cdot v$
\end{center}
 Consider the $K$-representation $\tau= \langle K\cdot v \rangle$. It is spanned by the set $\{v, u\beta_K v \mid u \in N_{n_K} / N_{n_K +1}\}$. Then we see it can be spanned by the set
 \begin{center}
 $\{u\beta_K v \mid u \in (N_{n_K} \smallsetminus N_{n_K +1}) /N_{n_K +1}\}$
 \end{center}
Now take a weight $\sigma$ contained in $\tau$, and the unique line $\sigma^{I_{1, K}}$ is given by a non-zero vector which we take as our $w$. Then $w$ is a linear combination of the elements from the above set. This implies, as before that $\phi (w)$ is fixed by $N'_{m_K + 2m-2}$\footnote{Note that in this case $w$ can be chosen as $v$, but as the argument indicates there are other possibilities. This means that, under the assumption $S_- v= S_K  v =0$, the vector $\phi (v)$ is already fixed by the whole group $I_{1, K}$.}.

\smallskip
Note that in each case the group $I_K$ acts as a character on the vector $w$ we find. By repeating the process, we find a non-zero $w \in \pi^{I_{1, K}}$ on which $I_K$ acts as a character, such that $\phi (w)$ is fixed by $N'_{m_K}$. Note that $\phi (w)$ is automatically fixed by $B\cap I_{1, K}$, whence the vector $\phi (w)$ lies in $(\pi')^{I_{1, K}}$.

\smallskip

We proceed to complete the proof. Recall our condition that $\pi$ is supersingular. We apply Proposition \ref{nilpotent of S+} to the vector $w$. We find some $k\geq 1$ such that $S^k _+ w =0$ but $S^{k-1} _+ w \neq 0$. We write the vector $S^{k-1} _+ w$ as $w'$. Then $w'$ is a non-zero $I_{1, K}$-invariant such that
$S_+ w' =0$. This, as we already recorded in \eqref{beta_K v in  Bv: explicit form}, gives that
\begin{center}
$\beta_K w'= -\sum_u u \alpha^{-2} h(\bar{t}^{-1})w' -\sum_u u \alpha^{-1} h(\bar{t}^{-1})w'$
\end{center}
where, in the first sum $u= n(\ast, \varpi^{n_K +2} _E t)$ goes through $(N_{n_K +2}\smallsetminus N_{n_K +3})/N_{n_K +4}$, which we denote by $\mathfrak{N}_1$; in the second sum $u= n(\ast, \varpi^{n_K +1} _E t)$ goes through $(N_{n_K +1}\smallsetminus N_{n_K +2})/N_{n_K +2}$, which we denote by $\mathfrak{N}_2$. As $\phi$ is a $B$-map, we firstly see that
\begin{center}
$\phi(\beta_K w')= -\sum_{u\in \mathfrak{N} _1} u \alpha^{-2} h(\bar{t}^{-1})\phi(w') -\sum_{u\in \mathfrak{N}_2 } u \alpha^{-1} h(\bar{t}^{-1})\phi(w')$
\end{center}
But we also have $\phi (S_+ w')= S_+ \phi (w') =0$, which gives us similarly that
\begin{center}
$\beta_K \phi(w')= -\sum_{u\in \mathfrak{N} _1} u \alpha^{-2} h(\bar{t}^{-1})\phi(w') -\sum_{u\in \mathfrak{N}_2} u \alpha^{-1} h(\bar{t}^{-1})\phi(w')$
\end{center}
We conclude
\begin{center}
$\phi (\beta_K w')= \beta_K \phi(w')$.
\end{center}

Recall that the vector $w'$ and its image $\phi (w')$ are both $I_{1, K}$-invariant. As $G= BI_{1, K} \cup B\beta_K I_{1, K}$ and the non-zero vector $w'$ generates the representation $\pi$, we conclude from the above equality that the $B$-map $\phi$ is indeed a $G$-map.
\end{proof}

\begin{remark}
In early versions of this paper, we didn't find a full proof of Theorem \ref{hom_G= hom_B for ss}. We came to the above proof recently. The operator $S_-$ is an analogue of the element $\Pi=\begin{pmatrix} 0  & 1  \\  \varpi_F & 0
\end{pmatrix}$
in $GL_2 (F)$, but the situation that $S_- v =0$ for some $v\in \pi^{I_{1, K}}$ can always happen, which is not the case for $GL_2 (F)$. This is dealt with in our above \textbf{Case III} argument, and we consider it as a novelty of our work. It would be interesting to see whether it can be adapted to other problems.
\end{remark}

\section*{Acknowledgements}
Most part of this work was done when the author was a postdoc at Einstein Institute of Mathematics, supported by ERC Grant AdG 669655, and our debt owed to the works of Pa$\check{\text{s}}$k$\bar{\text{u}}$nas (\cite{Pas07}) and Hu (\cite{Hu12}) should be clear to the reader. The author is currently supported by Department of Education of Zhejiang Province (Y202351903) and a start-up grant from Tongji Zhejiang College.

\bibliographystyle{amsalpha}
\bibliography{new}

\texttt{Tongji Zhejiang College, Jiaxing, 314051, China}

\emph{E-mail address}: \texttt{xupeng2012@gmail.com}

\end{document}